\documentclass[11 pt]{amsart}

\usepackage{lineno,hyperref}
\usepackage{amsmath,amssymb}
\usepackage{lineno}
\usepackage{centernot}
\theoremstyle{plain}
\newtheorem{thm}{Theorem}[section]
\newtheorem{lemma}[thm]{Lemma}
\newtheorem{cor}[thm]{Corollary}
\newtheorem{prop}[thm]{Proposition}

\theoremstyle{definition}
\newtheorem{defn}[thm]{Definition}
\newtheorem{example}[thm]{Example}

\newtheorem{rem}[thm]{Remark}

\numberwithin{equation}{section}

\newcommand{\interior}[1]{{\kern0pt#1}^{\mathrm{o}}}











\begin{document}


\title{Nil-reversible rings}

\author[Sanjiv Subba]{Sanjiv Subba $^\dagger$}

\address{$^\dagger$Department of Mathematics\\ NIT Meghalaya\\ Shillong 793003\\ India}
\email{sanjivsubba59@gmail.com}
\author[Tikaram Subedi]{Tikaram Subedi  {$^{\dagger *}$}}
\address{$^\dagger$Department of Mathematics\\ NIT Meghalaya\\ Shillong 793003\\ India}
\email{tikaram.subedi@nitm.ac.in}

\subjclass[2010]{Primary 16U80; Secondary 16S34, 16S36}.


\begin{abstract}
This paper introduces and studies nil-reversible rings wherein we call a ring $R$ nil-reversible if the left and right annihilators of every nilpotent element of $R$ are equal. Reversible rings (and hence reduced rings) form a proper subclass of nil-reversible rings and hence we provide some conditions for a nil-reversible ring to be reduced. It turns out that nil-reversible rings are abelian, 2-primal, weakly semicommutative and nil-Armendariz. Further, we observe that the polynomial ring over a nil-reversible ring $R$ is not necessarily nil-reversible in general, but it is nil-reversible if $R$ is Armendariz additionally.

\noindent \textbf{Keywords:} Reversible ring, nil-reversible ring, abelian ring.
\end{abstract}

\maketitle

\section{INTRODUCTION}
 Throughout this paper, unless otherwise mentioned all rings considered are associative with identity, $R$ represents a ring and all modules are unital. For any $a\in R$, the notations $r(a)$ and $l(a)$ represent right and left annihilators of $a$ respectively. We write $Z(R),~P(R),~ J(R),~N(R)$ and $U(R)$ respectively for the set of all central elements, the prime radical, the Jacobson radical, the set of all nilpotent elements, and the group of units of $R$.
 
 \quad A ring or an ideal is said to be reduced if it contains no non-zero nilpotent elements. \textit{Reversible ring} is defined in \cite{rs} and it generalizes reduced rings. $R$ is said to be reversible (\cite{rs}) if for any $w,h \in R$, $wh=0$ implies  $hw=0$. $R$ is said to be \textit{central reversible}(\cite{crev}) if $wh=0$ implies $hw\in Z(R)$, $w,h\in R$. $R$ is \textit{semicommutative} if $wh=0$ implies $wRh=0$, where $w,h\in R$. It is obvious that reversible rings are semicommutative rings. According to \cite{weaklysemi}, a ring $R$ is called \textit{weakly semicommutative} if for any $w,h\in R, wh=0$ implies $wrh\in N(R)$ for any $r\in R$. Clearly, weakly semicommutative rings are generalizations of semicommutative rings.
Following \cite{ucen}, $R$ is called unit-central whenever $U(R)\subseteq Z(R)$.
 
  \quad Following \cite{crev}, $R$ is called \textit{right} (\textit{left}) \textit{principally projective} if the right (left) annihilator of an element of $R$ is generated by an idempotent. A ring in which all idempotent elements are central  is said to be \textit{abelian}.  $R$ is \textit{directly finite} if $xy=1$ implies $yx=1$, $x,y\in R$. It is well known that abelian rings are directly finite. Following \cite{gpv}, $R$ is said to be \textit{strongly regular} if for any $x$ in $R$, there is $y$ in $R$ satisfying $x=yx^2$. $R$ is said to be $2-primal$ if $N(R)$ coincides with $P(R)$. Rings in which $N(R)$ is an ideal are said to be $NI$. Note that 2-primal rings are $NI$. $R$ is called a \textit{left} (\textit{right}) $SF$ (\cite{sfr}) if all simple left (right) $R$-modules are flat. A left $R$-module $M$ is said to be $YJ-$injective (\cite{yj}) if for every $w~(\neq 0)$ in $R$, there is a positive integer $m$ such that $w^m\neq 0$ and every left $R$-homomorphism from $Rw^m$ to $M$ extends to one from $R$ to $M$. Following \cite{gpv}, $R$ is called left $GPV$-ring if every simple left $R$-module is $YJ$-injective. Following \cite{nili}, a left $R$-module $M$ is called $Wnil-injective$ if for any $w~(\neq 0)\in N(R)$, there exists a positive integer $m$ such that $w^m\neq 0 $ and any left $R$-homomorphism $f:Rw^m\rightarrow M$ extends to one from $R$ to $M$. It is obvious that $YJ$-injective modules are left $W$-nil injective.
 
 \quad According to Rege and Chhawchharia (\cite{mb}), a ring $R$ is called \textit{Armendariz}  if  $f(x)=\sum\limits_{i=0}^{n}w_ix^i, g(x)=\sum\limits_{j=0}^{m}h_jx^j \in R[x]$ satisfy $f(x)g(x)=0$, then $w_ih_j=0$ for each $i,j$. In \cite{nilp}, $R$ is called \textit{nil-Armendariz} if $f(x)=\sum\limits_{i=0}^{n}w_ix^i, g(x)=\sum\limits_{j=0}^{m}h_jx^j\in R[x]$ satisfy $f(x)g(x)\in N(R)[x]$, then $w_ih_j\in N(R)$ for each $i,j$. According to \cite{carm}, $R$ is called central Armendariz if for any $f(x)=\sum\limits_{i=0}^{n}w_ix^i,g(x)=\sum\limits_{j=0}^{m}h_jx^j\in R[x]$ satisfy $f(x)g(x)=0$, then $w_ih_j\in Z(R)$ for each $i,j$.
 
 \quad For an endomorphism $\alpha$ of $R$, $R[x;\alpha]$ denotes the  \textit{skew polynomial} ring over $R$ whose elements are polynomials $\sum\limits_{i=0}^{n}w_ix^i,w_i\in R$, where addition is defined as  usual and the multiplication is defined by the rule $xw=\alpha (w)x$ for any $w\in R$. $R[x,x^{-1};\alpha]$ denotes the \textit{skew Laurent polynomial} ring and its elements are finite sum of the form $x^{-i}wx^j$ where $w\in R$ and $i$ and $j$ are non-negative integers. The \textit{skew power series} ring is represented by $R[[x;\alpha]]$ and its elements are of the form  $\sum\limits_{i=0}^{\infty}w_ix^i$, $w_i\in R$ and non-negative integer $i$. The \textit{skew Laurent power series} ring is denoted by $R[[x,x^{-1};\alpha]]$ and it is the localization of $R[[x;\alpha]]$ with respect to $\{x^i\}_{i\geq 0}$. Thus, $R[[x;\alpha]]$ is a subring of $R[[x,x^{-1};\alpha]]$. If $\alpha$ becomes an automorphism of $R$, then the elements of $R[[x,x^{-1};\alpha]]$ are of the form $x^pw_p+x^{p+1}w_{p+1}+...+w_0+w_1x+...$, $w_j\in R$ and integer $p\leq 0$ and $j\geq p$, and addition is defined as usual and the multiplication is defined by the relation $xw=\alpha(w) x$ for any $w\in R$. According to \cite{psr}, $R$ is called \textit{power-serieswise Armendariz} if whenever $f(x)=\sum\limits_{i=0}^{\infty}w_ix^i,g(x)=\sum\limits_{j=0}^{\infty}h_jx^j\in R[[x]]$ satisfy $f(x)g(x)=0$, then $w_ih_j=0$ for all $i,j$. $R$ is said to be \textit{skew power-serieswise Armendariz} (or SPA)(\cite{skew}) if for any $g(x)=\sum\limits_{i=0}^{\infty}w_ix^i, h(x)=\sum\limits_{j=0}^{\infty}h_jx^j\in R[[x;\alpha]], g(x)h(x)=0$ if and only if $w_ih_j=0$ for all $i,j$.
 
 \quad In this paper, we introduce a class of rings called \textit{nil-reversible} rings which is a strict generalization of reversible rings. It is proved that each of the classes of abelian, $2$-primal, weakly semicommutative and nil-Armendariz rings strictly contains the class of nil-reversible rings. It is also shown that the class of nil-reversible rings strictly contains the class of unit central rings. Moreover, the nil-reversibility property is shown to be preserved under various ring extensions.

	\section{Nil-reversible rings}

\begin{defn}\rm
We call a ring $R$ \emph{nil reversible} if $l(a)=r(a)$ for any $a \in N(R)$.
\end{defn}
\begin{rem}\label{sub}
By definition, it is clear that nil-reversible rings are closed under subrings.
\end{rem}

 It is evident that a reversible ring is nil-reversible. However, there exists a nil-reversible ring which is not reversible as given in the next example.  

 \begin{example}
 Consider the ring $R=\mathbb{Z}_{2} [x,y]/I$, where $xy \ne yx$ and $I=< y^2x, xy, yx^2>$. Clearly, $R$ is not reversible. We have $N(R)=\{\overline{0},\overline{yx}\}$ and it is evident that $l(a)=r(a)$ for any $a\in N(R)$. So $R$ is nil-reversible. 
 \end{example}
\begin{prop}\label{a2w}
  Nil-reversible rings are
  \begin{enumerate}
  \item abelian.
  \item 2-primal.
  \item weakly semicommutative.
  \end{enumerate}

\end{prop}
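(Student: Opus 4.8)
The plan is to run all three parts off a single consequence of the definition: if $a$ is nilpotent then $l(a)=r(a)$, so for every $w\in R$ we have $aw=0$ if and only if $wa=0$. Part (1) is the cleanest, so I would dispatch it first; part (3) I expect to reduce to part (2); and part (2) is where the real work lies.

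For (1), I would take an idempotent $e$ and probe it against an arbitrary $r\in R$ using the two corner elements $u=er(1-e)$ and $v=(1-e)re$, each of which squares to zero and is thus nilpotent. Since $(1-e)u=0$, the element $1-e$ lies in $l(u)=r(u)$, hence $u(1-e)=0$; but $u(1-e)=u$, so $u=0$, i.e. $er=ere$. Symmetrically, $ev=0$ puts $e$ in $l(v)=r(v)$, so $ve=0$, and $ve=v$ gives $v=0$, i.e. $re=ere$. Comparing the two conclusions yields $er=ere=re$, so every idempotent is central.

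For (2) the goal is $N(R)=P(R)$; since $P(R)$ is always a nil ideal, $P(R)\subseteq N(R)$, and I must prove the reverse inclusion, i.e. that every nilpotent is strongly nilpotent. The reversal property supplies two tools. First, if $a^2=0$ then for any $x$, putting $y=xa$ gives $ya=xa^2=0$, hence $ay=axa=0$; thus a square-zero element satisfies $aRa=0$ and so lies in every prime ideal, giving $a\in P(R)$. More generally, if $a^n=0$ and $i+j\ge n$, then with $u=a^j$ and $w=a^ix$ we have $uw=a^{i+j}x=0$, whence $wu=a^ixa^j=0$. The intended route is induction on the nilpotency index: given $a^n=0$ one shows that for every $r$ the element $ara$ is nilpotent of index strictly less than $n$, so by the inductive hypothesis $ara\in P(R)$; because $P(R)$ consists of strongly nilpotent elements, the recursive sequence $a_0=a,\ a_{k+1}=a_kr_ka_k$ then terminates at $0$, placing $a$ in $P(R)$.

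I expect the index reduction for $ara$ to be the main obstacle. The case $n=2$ is immediate from $aRa=0$, and the relations $a^ixa^j=0$ ($i+j\ge n$) settle $n=3,4$, since there $(ara)^2$ (resp. $(ara)^3$) contains a vanishing factor $a^2ra$ (resp. $a^2ra^2$); but for larger $n$ the powers of $a$ appearing in $(ara)^k$ are separated by arbitrary ring elements and never accumulate past $a^2$, so the single-gap relations no longer bite. Equivalently, the crux is that prime ideals need not be completely prime, so one must argue that $R/P(R)$ is reduced directly: I would try to push the reversal trick through $P(R)$ (deduce $aRa\subseteq P(R)$ from $a^2\in P(R)$) or, failing that, show via the minimal-prime machinery that every minimal prime of a nil-reversible ring is completely prime. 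Once (2) holds, $R$ is $NI$, and (3) is short: if $ab=0$ then $(ba)^2=b(ab)a=0$, so $ba\in N(R)$, and for each $r$ we get $(arb)^2=ar(ba)rb\in N(R)$ because $N(R)$ is an ideal; hence $arb$ is nilpotent, which is precisely weak semicommutativity.
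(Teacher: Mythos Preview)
Your arguments for (1) and, modulo (2), for (3) are fine and close to the paper's. Part (1) is essentially identical: the paper works with $ex-exe$ and $xe-exe$, which are exactly your $u=er(1-e)$ and $v=(1-e)re$. For (3) the paper does not lean on (2): from $ab=0$ it notes $(ba)^2=0$, so $rba\cdot ba=0$; nil-reversibility applied to the nilpotent $ba$ gives $ba\cdot rba=0$, i.e.\ $barba=0$, and then $(arb)^3=a\,(rbarba)\,rb=0$ outright. Your route through ``$N(R)$ is an ideal'' is equally valid once (2) is established, but it does make (3) hostage to the gap below.

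The genuine gap is in (2). You correctly isolate the obstacle---the single-gap relations $a^ixa^j=0$ for $i+j\ge n$ do not by themselves force $(ara)^k=0$ for large $n$---but you stop short of the move that dissolves it. What you are missing is that nil-reversibility lets you \emph{iterate} the insertion of a ring element, not just perform it once. From $w^m=0$ one has $Rw^{m-1}\cdot w=0$, so $Rw^{m-1}\subseteq l(w)=r(w)$ and hence $wRw^{m-1}=0$. Then $RwRw^{m-2}\cdot w\subseteq R\,(wRw^{m-1})=0$, so $RwRw^{m-2}\subseteq l(w)=r(w)$, giving $wRwRw^{m-2}=0$. Inductively, $(wR)^{k}w^{m-k}=0$ forces $(wR)^{k+1}w^{m-k-1}=0$, and after $m-1$ steps one obtains $(wR)^{m-1}w=0$, i.e.\ $(Rw)^m=0$. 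Thus $Rw$ is a nilpotent left ideal, the two-sided ideal $RwR$ satisfies $(RwR)^m=0$, and $w\in P(R)$. This is precisely the paper's argument, stated there in the compressed form ``$wRw^{m-1}=0$ \dots\ yields $(Rw)^m=0$''. Incidentally, the same iteration would also complete your inductive scheme: once every product containing $m$ copies of $w$ separated by ring elements vanishes, $(wrw)^{\lceil m/2\rceil}=0$ and the index genuinely drops---but by then the induction is no longer needed.
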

\begin{proof}
 Let $R$ be a nil-reversible ring.
  \begin{enumerate}
  \item Let $e$ be an idempotent element of $R$. For any $x\in R$, $ex-exe\in N(R)$. Note that $(ex-exe)e=0$. By hypothesis, $e(ex-exe)=0$, so $ex=exe $. Again, $xe-exe\in N(R)$ and $e(xe-exe)=0$. So by nil-revesibility of $R$, we have $(xe-exe)e=0$, that is, $xe=exe$. Hence, $ex=xe$.\\
  
  \item Note that $P(R)\subseteq N(R)$. Suppose $w~(\neq 0)\in N(R)$. Then there is a positive integer $m\geq 2$ such that $w^m=0$. Thus, $Rw^{m-1}w=0$. This implies that $wRw^{m-1}=0$ as $R$ is nil-reversible. This yields $(Rw)^m=0$, so $w\in P(R)$.
  
  \item Let $a,b\in R$ be such that $ab=0$. Then, $ba\in N(R)$. For any $r\in R, rbaba=0$. Since $R$ is nil-reversible, $barba=0$. Then, $(arb)^3=arbarbarb=0$, whence $arb\in N(R)$. Hence, $R$ is weakly semicommutative.
  \end{enumerate}
\end{proof}
 One may think whether an abelian or a 2-primal or a weakly semicommutative ring is nil-reversible. Hence, we provide the following examples.
 \begin{example}
  \begin{enumerate}
    \item  Let $\mathbb{Z}$ be the ring of integers, and consider the ring \\$R$=
    $\left\{\left[\begin{array}{rr}
    x & y \\
    z & w\\ 
    \end{array}\right]: x\equiv w(mod~ 2), y\equiv z(mod ~2),x,y,z,w\in \mathbb{Z} \right \}$. By [\cite{csemi},Example 2.7], $R$ is abelian. Now, $\left[\begin{array}{rr}
    0 & 2 \\
    0 & 0\\
    \end{array} \right] \in N(R)$. We have, $\left[\begin{array}{rr}
     0 & 2 \\
     0 & 0\\
     \end{array} \right]\left[\begin{array}{rr}
      2 & 0 \\
      0 & 0\\
      \end{array} \right]=\left[\begin{array}{rr}
       0 & 0\\
       0 & 0\\
       \end{array} \right]$. But  $\left[\begin{array}{rr}
         2 & 0 \\
         0 & 0\\
         \end{array} \right]\left[\begin{array}{rr}
          0 & 2 \\
          0 & 0\\
          \end{array} \right]\neq \left[\begin{array}{rr}
           0 & 0\\
           0 & 0\\
           \end{array} \right]$. So, $R$ is not nil-reversible.
    \item  Take $R=\left[\begin{array}{rr}
    \mathbb{R} & \mathbb{R} \\
     0 & \mathbb{R}\\
     \end{array}\right]$, where $\mathbb{R}$ denotes the filed of real numbers. Then, $R$ is $2-primal$. Observe that $\left[\begin{array}{rr}
           0 & 2 \\
           0 & 0\\
           \end{array}\right] \in N(R) $,
      $\left[\begin{array}{rr}
      2 & 0 \\
      0 & 0\\
      \end{array}\right]\left[\begin{array}{rr}
      0 & 2 \\
      0 & 0\\
      \end{array}\right] \ne 0$ , $\left[\begin{array}{rr}
         0 & 2 \\
         0 & 0\\
         \end{array}\right]\left[\begin{array}{rr}
         2 & 0 \\
         0 & 0\\
         \end{array}\right] = 0$. So, $R$ is not nil-reversible.
   \item We use the ring given in [\cite{onws},example 2.1]. Let $F$ be a field. $F<x,y>$ the free algebra in non-commuting indeterminates $x,y$ over $F$ and  $J$ denotes the ideal $<x^2>^2$ of $F<x,y>$, where $<x^2>$ is the ideal of $F<x,y>$ generated by $x^2$. Let $R=F<x,y>/J$. Then by \cite{onws}, $R$ is weakly semicommutative and $N(R)=\overline{x}R\overline{x}+R\overline{x}^2R+F\overline{x}$. Hence $\overline{x}\in N(R)$. We have, $ \overline{x}(\overline{x}^3\overline{y})=\overline{x^4}\overline{y}=0$. But $(\overline{x}^3\overline{y})\overline{x}=\overline{x^3yx}\neq 0$. Thus, $l(\overline{x})\neq r(\overline{x})$. Therefore, $R$ is not nil-reversible.
   \end{enumerate}
 \end{example}
  Clearly, every reduced ring is nil-reversible, but not every nil-reversible ring is reduced. The next proposition provides some conditions under which a nil-reversible ring becomes reduced.
   \begin{prop}\label{red}
   Let $R$ be a nil-reversible ring. Then $R$ is reduced if $R$ satisfies any one of the following conditions:
   \begin{enumerate}
   \item $R$ is \emph{right (left) principally projective}.
   \item $R$ is \emph{semiprime}.
   \item  R is left SF
   \item  R is right SF
   \item  Every simpler singular left R-module is Wnil-injective
   \item  Every simpler singular right R-module is Wnil-injective
   \item  Every maximal essential left ideal of R is W-nil injective
   \item  Every maximal essential right ideal of R is Wnil-injective 
   
   \end{enumerate}
   \end{prop}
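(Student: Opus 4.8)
The plan is to argue each implication by contradiction from a single normal form. If $R$ is not reduced, choose $b$ with $b^{n}=0\neq b^{n-1}$ and set $a=b^{\,n-1}$; then $a\neq 0$ and $a^{2}=0$, so $a\in N(R)$ and nil-reversibility hands us the identity $l(a)=r(a)$ that will drive every case. I will use throughout that $a\in r(a)=l(a)$ and, since $a(ax)=a^{2}x=0$ and $(xa)a=xa^{2}=0$, that $aR\subseteq r(a)$ and $Ra\subseteq l(a)$. Case (2) is then immediate: by Proposition~\ref{a2w} $R$ is $2$-primal, so $N(R)=P(R)$, and semiprimeness ($P(R)=0$) forces $N(R)=0$. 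For (1), right principal projectivity gives $r(a)=eR$ with $e^{2}=e$, which is central because $R$ is abelian (Proposition~\ref{a2w}); from $a\in r(a)$ we get $a=ea$ and from $e\in r(a)$ we get $ae=0$, so $a=ea=ae=0$, a contradiction, the left case being symmetric.

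For (3) and (4) I would apply the flatness test for a cyclic module over a maximal one-sided ideal: if $M$ is a maximal left ideal then $R/M$ is flat iff every $x\in M$ satisfies $x=xc$ for some $c\in M$, and dually on the right. Choosing $M\supseteq l(a)$ maximal (so $a\in M$), left-$SF$ yields $c\in M$ with $a=ac$; then $1-c\in r(a)=l(a)\subseteq M$, whence $1\in M$, which is impossible. The right-$SF$ case is the mirror image, using $M\supseteq r(a)$ and $a=ca$.

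The substantive cases are (5)--(8). Because $a^{2}=0$, the exponent in the definition of $W$-nil-injectivity can only be $m=1$, so the hypotheses provide extensions of homomorphisms defined on $Ra$ (resp.\ $aR$). Treating (6), pick a maximal right ideal $M\supseteq r(a)$. First I claim $M$ is essential: otherwise abelianness produces a central idempotent $e$ with $R=M\oplus eR$, and $a\in M$ forces $ea=0$, so $e\in l(a)=r(a)\subseteq M$, giving $e\in M\cap eR=0$, a contradiction. Hence $R/M$ is a simple singular right module, so it is $W$-nil-injective, and the map $f:aR\to R/M$, $f(ar)=r+M$ (well defined since $r(a)\subseteq M$) extends to $g(r)=dr+M$. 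Evaluating at $a$ gives $da-1\in M$, while $(da)a=da^{2}=0$ places $da\in l(a)=r(a)\subseteq M$; therefore $1\in M$, a contradiction. Case (5) is symmetric with $Ra$, $l(a)$ and a maximal left ideal.

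The main obstacle is twofold. First, one must secure the essentiality of $M$ so that $R/M$ is genuinely singular; I expect the central-idempotent argument above, resting on abelianness (Proposition~\ref{a2w}), to settle this uniformly, and it is exactly here --- together with the final step $da\in l(a)=r(a)$ --- that nil-reversibility is indispensable. Second, for (7) and (8) the injective object is the maximal essential ideal $M$ itself rather than the quotient $R/M$; there I would feed the inclusion data into a homomorphism $Ra\to M$ (resp.\ $aR\to M$) and again convert the resulting annihilation into membership in $M$ via $l(a)=r(a)$ to reach $1\in M$. In every case the contradiction is produced by turning one-sided annihilation into two-sided annihilation through the defining identity of nil-reversible rings.
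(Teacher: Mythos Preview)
Your argument is correct and matches the paper's approach in every case: pick $a\neq 0$ with $a^{2}=0$, then exploit $l(a)=r(a)$ to force $1$ into a proper one-sided ideal (or, in (1)--(2), to kill $a$ directly). The only cosmetic differences are that for (2) you quote $2$-primality from Proposition~\ref{a2w} whereas the paper shows $aRa=0$ by hand, and for the essentiality step in (5)--(8) you phrase the complemented case as $R=M\oplus eR$ where the paper writes the non-essential maximal ideal as $l(e)$; the $W$nil-injective extension arguments you outline for (5)--(8) are exactly the ones the paper carries out.
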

   \begin{proof}
    \begin{enumerate}
    \item  Let $R$ be a right principally projective ring. Suppose $w \in R$ such that $w^2=0$. Then, $w \in N(R)$. As $R$ is right principally projective, $r(w)=eR $ for some idempotent $e \in R$. This implies $w=ew $. As $e\in r(w)$, $we=0$. Since $R$ is nil-reversible, $ew
    =0$. Thus, $w=0$ and so $R$ is reduced.
    Similarly, we can prove that $R$ is reduced if $R$ is left principally projective.  
    \item   Let $R$ be a semiprime ring. Suppose $w \in R$ such that $w^2=0$. Then $Rww=0$. Since $R$ is nil-reversible, $wRw=0$. By hypothesis, $w=0$. Thus, $R$ is reduced.
    \item Let $w~(\neq 0)\in R$ and $w^2=0$. Then $l(w)\neq R$ and $l(˘w)$ is contained in some maximal left ideal $H$ of $R$. As $w\in l(w)\subseteq H$ and $R$ is left $SF$, there exists $h\in H$ such that $w=wh$. So, $w(1-h)=0$. Since $R$ is nil-reversible, $(1-h)w=0$, $(1-h)\in l(w)\subseteq H$. Then, $1=(1-h)+h\in H$. This is a contradiction.
    \item The proof is similar to (3).
    \item Let $w~(0\neq)$ be in $R$ satisfying $w^2=0$. Then, $l(w)$ is contained in some maximal left ideal $H$. If possible, let us assume that $H$ is not essential. Then there exists an idempotent $e\in R$ with the property $H=l(e)$. Since $w\in H$, $we=0$. As $R$ is abelian (by \textbf{Proposition \ref{a2w}}), $ew=0$, so $e\in l(w)\subseteq H=l(e) $. Thus, $e^2=0=e$. This is a contradiction. Therefore, $H$ is essential and $R/H$ is simple singular left $R$-module which is $W$nil-injective by hypothesis. Define $f: Rw\rightarrow R/H$ by $f(rw)=r+H$. This is well-defined as $l(W)\subseteq H$. By hypothesis, there exists a left $R$-homomorphism $g:R\rightarrow H$ which extends $f$.  Therefore, $1+H=f(w)=g(w)=ws+H$, for some $s\in R$. So, $1-ws\in H$. Now, $w^2s=0$ implies $ws\in r(w)=l(w)\subseteq H$. So, $ws\in H$, $1=1-ws+ws\in H$. This is a contradiction.
    \item The proof is same as in (5).
    \item Let $w~(\neq 0) \in R$ and $w^2=0$. Then, $l(w)\subseteq H$ for some maximal left ideal $H$ of $R$. As in (5), $H$ is essential. Define $f:Rw\rightarrow H$ as $f(rw)=rw$. By hypothesis, there exists a left $R$-homomorphism $g:R\rightarrow H$ which extends $f$. Therefore, $w=f(w)=g(w)=wg(1)=wh$ for some $h\in H$. As $R$ is nil-reversible, $1-h\in l(w)\subseteq H$. So, $1=(1-h)+h\in H$, which is a contradiction.
    \item Similar to (7).
    \end{enumerate}
    \end{proof}
    \begin{cor}
    Nil-reversible left (right) $SF$ ring is strongly regular.
    \end{cor}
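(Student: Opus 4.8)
The plan is to first reduce to the reduced case and then mimic the maximal-ideal argument already used in Proposition \ref{red}(3)--(4). Indeed, by parts (3) and (4) of Proposition \ref{red}, a nil-reversible ring which is left $SF$ (respectively right $SF$) is reduced, so I may assume throughout that $R$ is reduced. A reduced ring is reversible, hence $l(a)=r(a)$ for every $a\in R$, and (being reversible) its idempotents are central. These two facts, together with the flatness characterisation of $SF$ rings, are the only inputs I will need.

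The heart of the argument is the identity $Ra+l(a)=R$ for every nonzero $a$ (and, in the right $SF$ case, its mirror $aR+r(a)=R$). In the left $SF$ case I would argue by contradiction: if $Ra+l(a)\neq R$, choose a maximal left ideal $M\supseteq Ra+l(a)$. Since $a=1\cdot a\in Ra\subseteq M$ and $R$ is left $SF$, flatness of the simple module $R/M$ yields $h\in M$ with $a=ah$ --- exactly the step ``$w=wh$'' used in the proof of Proposition \ref{red}(3). Then $a(1-h)=0$, so $1-h\in r(a)=l(a)\subseteq M$ by reducedness, whence $1=(1-h)+h\in M$, a contradiction. Therefore $Ra+l(a)=R$.

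Finally I would extract strong regularity. Writing $1=xa+c$ with $c\in l(a)$ (so $ca=0$) and multiplying on the right by $a$ gives $a=xa^{2}+ca=xa^{2}$, which is precisely the defining condition $a=ya^{2}$ of a strongly regular ring. The right $SF$ case is entirely symmetric: it produces $aR+r(a)=R$, hence $a=a^{2}b$; and since $R$ is reduced, $a=a^{2}b$ forces $a\in Ra^{2}$ as well (one checks that $ab$ is then an idempotent, hence central, and a short annihilator computation returns $a=ba^{2}$), so the definition is met on the correct side. I expect no serious obstacle: the only delicate points are getting the handedness of the flatness criterion right (matching the ``$w=wh$'' convention of Proposition \ref{red}) and reconciling the one-sided forms $a\in a^{2}R$ and $a\in Ra^{2}$ in the reduced setting. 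The genuinely new content is merely the packaging of the already-established maximal-ideal technique into the surjectivity $Ra+l(a)=R$.
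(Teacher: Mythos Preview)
Your argument is correct. The paper's own proof is a one-line citation: having established in Proposition \ref{red}(3)--(4) that a nil-reversible left (right) $SF$ ring is reduced, it simply invokes \cite[Theorem 4.10]{rege}, which says that a reduced left (right) $SF$ ring is strongly regular. You instead reprove this cited result directly, reusing the maximal-ideal flatness trick from Proposition \ref{red}(3) to obtain $Ra+l(a)=R$ (respectively $aR+r(a)=R$) and then extracting $a\in Ra^{2}$ from the decomposition $1=xa+c$. So the two proofs share the same first step (reduction to the reduced case) and diverge only in that the paper outsources the second step while you carry it out by hand; your version is more self-contained, the paper's is shorter. Your handling of the right $SF$ side is also fine: once $a=a^{2}b$ you get $a(1-ab)=0$, hence $(1-ab)a=0$ by reducedness, so $a=aba$ and $R$ is von Neumann regular; a reduced von Neumann regular ring is strongly regular on both sides, which reconciles the handedness without further work.
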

    \begin{proof}
    Follows from \cite[Theorem 4.10]{rege} and the \textbf{Proposition \ref{red}}.
    \end{proof}
      Let $P$ be an (R,R)-bimodule. The \textit{trivial extention} of $R$ by $P$ is the ring $T(R,P)=R\bigoplus P$, where the addition is usual and the multiplication is given by:\\
             $(r_1,p_1)(r_2,p_2)=(r_1r_2,r_1p_2+p_1r_2), p_i\in P,r_i\in R $ and $i=1,2$. This ring is isomophic to the ring \\ $\left\{\left[\begin{array}{rr}
             x & p\\
             0 & x
             \end{array}
             \right] : x\in R, p\in P \right\}$, where the operations are usual matrix operations.
                \begin{prop}
                    If $T(R,R)$ is nil-reversible then $R$ is reversible.
                \end{prop}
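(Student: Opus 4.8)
The plan is to exploit the matrix representation $T(R,R)\cong\left\{\begin{pmatrix} x & p \\ 0 & x\end{pmatrix}: x,p\in R\right\}$ and to verify reversibility of $R$ straight from the definition: given $a,b\in R$ with $ab=0$, I must produce $ba=0$. The guiding idea is that the strictly upper-triangular matrices are nilpotent, so placing $a$ in the off-diagonal slot yields a nilpotent element of $T(R,R)$ to which the hypothesis applies, while keeping $b$ on the diagonal records the products $ab$ and $ba$ in that same off-diagonal slot.

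Concretely, first I would record the multiplication rule and observe that $A:=\begin{pmatrix} 0 & a \\ 0 & 0\end{pmatrix}$ satisfies $A^2=0$, hence $A\in N(T(R,R))$. Next, set $B:=\begin{pmatrix} b & 0 \\ 0 & b\end{pmatrix}$ and compute $AB=\begin{pmatrix} 0 & ab \\ 0 & 0\end{pmatrix}$, which is zero precisely because $ab=0$; thus $B\in r(A)$.

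The decisive step is to invoke nil-reversibility of $T(R,R)$: since $A$ is nilpotent, $l(A)=r(A)$, so $B\in r(A)=l(A)$, i.e.\ $BA=0$. Finally I would compute $BA=\begin{pmatrix} 0 & ba \\ 0 & 0\end{pmatrix}$ and read off $ba=0$, which is exactly reversibility of $R$.

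There is no serious obstacle here beyond choosing the embeddings correctly: the only things to get right are that $a$ (not $b$) must occupy the nilpotent off-diagonal position so that the equality $l=r$ is available for $A$, and that $b$ be taken as the scalar diagonal matrix so that $AB$ and $BA$ encode exactly $ab$ and $ba$. One should also keep in mind that nil-reversibility is applied to $A$ alone—$B$ need not be nilpotent—so the whole argument is a single application of the definition rather than anything requiring $B\in N(T(R,R))$.
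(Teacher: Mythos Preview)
Your proposal is correct and follows essentially the same approach as the paper's own proof: both place the first factor in the strictly upper-triangular (hence nilpotent) slot and the second factor on the diagonal, then apply nil-reversibility of $T(R,R)$ to pass from $AB=0$ to $BA=0$ and read off $ba=0$. The only differences are cosmetic (variable names and your explicit mention of $A^2=0$ and the $l(A)=r(A)$ formulation).
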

                 \begin{proof}
                 Let $w,h\in R$ and $wh=0$. Observe that $\left[\begin{array}{rr}
                  0 & w \\
                  0 & 0\\
                  \end{array}\right] \in N(T(R,R))$ and $\left[\begin{array}{rr}
                   h & 0 \\
                   0 & h\\
                   \end{array}\right] \in T(R,R)$ and
                   $\left[\begin{array}{rr}
                    0 & w \\
                    0 & 0\\
                    \end{array}\right]\left[\begin{array}{rr}
                     h & 0 \\
                     0 & h\\
                     \end{array}\right]=\left[\begin{array}{rr}
                      0 & wh \\
                      0 & 0\\
                      \end{array}\right]=\left[\begin{array}{rr}
                       0 & 0 \\
                       0 & 0\\
                       \end{array}\right]$. Since $T(R,R)$ is nil-reversible,
                       $\left[\begin{array}{rr}
                         0 & 0 \\
                         0 & 0\\
                         \end{array}\right]=\left[\begin{array}{rr}
                          h & 0 \\
                          0 & h\\
                          \end{array}\right]\left[\begin{array}{rr}
                           0 & w \\
                           0 & 0\\
                           \end{array}\right]=\left[\begin{array}{rr}
                            0 & hw \\
                            0 & 0\\
                            \end{array}\right]$. This gives $hw=0$.
                 \end{proof}
                 
\begin{prop}
 \label{sq}
 If $R$ be a ring such that $(wh)^2=0$ implies $wh=0$ and $(hw)^2=0$ implies $hw=0,w\in N(R),h\in R$ then $R$ is nil-reversible.
 \end{prop}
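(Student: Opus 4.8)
The plan is to verify directly that $l(a) = r(a)$ for an arbitrary $a \in N(R)$, establishing the two inclusions separately and exploiting the symmetry between the two hypotheses. The key observation is purely computational: if $h$ annihilates $a$ on one side, then the product of $h$ and $a$ taken in the opposite order squares to zero, and the hypothesis then collapses that square-zero product to zero.

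First I would show $r(a) \subseteq l(a)$. Take $h \in r(a)$, so $ah = 0$. Then $(ha)^2 = h(ah)a = 0$. Since $a \in N(R)$, the second hypothesis (applied with $w = a$, so that $(hw)^2 = 0$ forces $hw = 0$) gives $ha = 0$, i.e. $h \in l(a)$. For the reverse inclusion $l(a) \subseteq r(a)$, take $h \in l(a)$, so $ha = 0$; then $(ah)^2 = a(ha)h = 0$, and the first hypothesis (with $w = a$, so that $(wh)^2 = 0$ forces $wh = 0$) yields $ah = 0$, i.e. $h \in r(a)$. Combining the two inclusions gives $l(a) = r(a)$ for every nilpotent $a$, which is exactly nil-reversibility.

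I do not anticipate a genuine obstacle here: the entire content is the identity $(ha)^2 = h(ah)a$ (respectively $(ah)^2 = a(ha)h$), which converts a one-sided annihilation into the square-zero condition the hypothesis is designed to kill. The only point requiring care is to invoke the correct one of the two hypotheses in each direction—the $(hw)^2$-version when passing from the right annihilator to the left, and the $(wh)^2$-version when passing from the left annihilator to the right—and to note throughout that the nilpotent element in the hypothesis is $a$ itself, which lies in $N(R)$ by assumption, so no additional nilpotency check is needed on the products $ha$ or $ah$.
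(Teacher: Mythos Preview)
Your proof is correct and follows essentially the same approach as the paper: from $ah=0$ deduce $(ha)^2=0$ and apply the hypothesis, then argue symmetrically for the other inclusion. The paper's version is terser (it omits the explicit identity $(ha)^2=h(ah)a$ and the bookkeeping of which hypothesis is invoked where), but the argument is the same.
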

 \begin{proof}
 Let $wh=0,w\in N(R), h\in R$. Then $(hw)^2=0$. So by hypothesis, $hw=0$. This implies $r(w)\subseteq l(w)$. Similarly, $l(w)\subseteq r(w)$ for $w\in N(R)$. Thus, $R$ is nil-reversible.
 \end{proof}
 \begin{prop}
If $R$ is unit-central then $R$ is nil-reversible.
 \end{prop}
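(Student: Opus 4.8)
The plan is to establish the stronger fact that $N(R)\subseteq Z(R)$ whenever $R$ is unit-central, from which nil-reversibility is immediate. The central observation is that every nilpotent element is absorbed into a unit: if $a\in N(R)$ with $a^n=0$, then $1+a$ is invertible with inverse $\sum_{i=0}^{n-1}(-1)^i a^i$, so $1+a\in U(R)$.

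First I would fix $a\in N(R)$ and record, via the finite series above, that $1+a\in U(R)$. Applying the unit-central hypothesis $U(R)\subseteq Z(R)$ gives $1+a\in Z(R)$. Since $Z(R)$ is a subring containing the identity, subtracting yields $a=(1+a)-1\in Z(R)$. Thus every nilpotent element of $R$ is central.

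With $a$ central, the equality of annihilators is automatic: for any $b\in R$ we have $ab=ba$, so $ba=0$ if and only if $ab=0$, that is, $l(a)=r(a)$. As $a\in N(R)$ was arbitrary, $R$ is nil-reversible.

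I do not anticipate a genuine obstacle here; the only step requiring a word of justification is the passage from ``$1+a$ is central'' to ``$a$ is central'', which uses nothing beyond the fact that $Z(R)$ is closed under subtraction and contains $1$. Everything else rests on the standard observation that $1$ plus a nilpotent is a unit.
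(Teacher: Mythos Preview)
Your proof is correct and follows essentially the same approach as the paper: both use that $1\pm a$ is a unit, hence central by hypothesis, and deduce that $a$ itself commutes with every element of $R$, from which $l(a)=r(a)$ follows immediately. The only cosmetic difference is that the paper expands $(1-w)(1-h)=(1-h)(1-w)$ to obtain $wh=hw$, whereas you invoke the subring property of $Z(R)$ to get $a\in Z(R)$ directly.
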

 \begin{proof}
 Let $w\in N(R), h\in R$. Now, $(1-w)\in U(R)\subseteq Z(R)$. So, $(1-w)(1-h)=(1-h)(1-w)$. This implies $hw=wh$. Therefore, $hw=0$ if and only if $wh=0$. Hence $R$ is nil-reversible.
 \end{proof}
 But the converse does not hold.
 \begin{example}
Take $R=T(\mathbb{R},\mathbb{R})$ and $a=\left[\begin{array}{rr}
1 & 1\\
0 & 1
\end{array} \right]\in R$, where $\mathbb{R}$ denotes the field of real numbers. Then $R$ is nil-reversible by \cite[Proposition 1.6]{ext}. Obviously, $a\in U(R)$ but $a\notin Z(R)$. 
 \end{example}
 \begin{prop}
           Suppose $R$ is a central reversible ring containing $0$ as the only central nilpotent element then $R$ is nil-reversible.
         \end{prop}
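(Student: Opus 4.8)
The plan is to verify the defining condition of nil-reversibility directly: for every $w \in N(R)$ and every $h \in R$ I must show that $wh = 0$ if and only if $hw = 0$, which is precisely the equality $l(w) = r(w)$. By the evident left--right symmetry of the two implications, it suffices to carry out one direction carefully and then mirror it.

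So suppose first that $w \in N(R)$ and $h \in R$ satisfy $wh = 0$. The key observation is that the reversed product is automatically nilpotent: since $(hw)^2 = h(wh)w = h\cdot 0 \cdot w = 0$, we have $hw \in N(R)$. On the other hand, applying central reversibility to $wh = 0$ gives $hw \in Z(R)$. Thus $hw$ is a \emph{central} nilpotent element, and by the standing hypothesis that $0$ is the only central nilpotent element of $R$, we conclude $hw = 0$. This yields the inclusion $r(w) \subseteq l(w)$.

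For the reverse inclusion, assume instead that $hw = 0$. The analogous computation $(wh)^2 = w(hw)h = 0$ shows $wh \in N(R)$, while central reversibility applied to $hw = 0$ gives $wh \in Z(R)$; again $wh$ is a central nilpotent element and hence $wh = 0$, so $l(w) \subseteq r(w)$. Combining the two inclusions gives $l(w) = r(w)$ for every $w \in N(R)$, i.e. $R$ is nil-reversible.

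There is essentially no hard step here; the only thing to get right is the order in which the two hypotheses are invoked. Nilpotency of the reversed product is forced purely by the relation $wh = 0$, and centrality is forced by central reversibility; it is their combination, together with the assumption on central nilpotents, that collapses the product to zero. I would also note in passing that the argument never uses $w \in N(R)$, so the same reasoning in fact establishes that $R$ is reversible (of which nil-reversibility is a special case); one may either record this stronger conclusion or simply restrict attention to $w \in N(R)$ to match the statement as written.
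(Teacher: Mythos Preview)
Your proof is correct and follows the same overall strategy as the paper: show that the reversed product is both central (via central reversibility) and nilpotent, then invoke the hypothesis that the only central nilpotent is $0$. The one difference lies in how nilpotency of the reversed product is obtained. The paper first uses central reversibility to get $ba \in Z(R)$ and then exploits $a \in N(R)$, say $a^m = 0$, to compute $(ba)^m = b^m a^m = 0$; you instead observe directly from $wh = 0$ that $(hw)^2 = h(wh)w = 0$, which is slightly cleaner and, as you correctly point out, makes no use of the assumption $w \in N(R)$. Your argument therefore actually proves the stronger conclusion that $R$ is reversible, whereas the paper's computation genuinely needs the nilpotency of $a$.
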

         \begin{proof}
            Let $a\in N(R)$ and $b\in r(a)$. We have $a^m=0$ for some positive integer $m$. Since $R$ is central reversible, $ba\in Z(R)$.
            Now, $(ba)^m=b^ma^m=0$. Thus, $ba\in N(R)\cap Z(R)=\{0\}$. It gives $ba=0$, and so $r(a)\subseteq l(a)$. Similarly, $l(a)\subseteq r(a)$. Hence $r(a)=l(a)$.
            \end{proof}
  Let $A$ be an algebra (not necessarily with identity) over a commutative ring $C$. The \emph{Dorroh extension} of $A$ by $C$ is the ring denoted by $A \bigoplus _{D} C$, with the operations $(a_1,s_1)+(a_2,s_2)=(a_1+a_2,s_1+s_2)$ and $(a_1,s_1)(a_2,s_2)=(a_1 a_2+s_{1}a_{2}+s_{2}a_{1},s_{1}s_{2})$ for all $a_i\in A$ and $s_i\in C, i=1,2$.
  \begin{prop}
  \textit{Dorroh Extension} of a nil-reversible ring $R$ by $\mathbb{Z}$ is nil-reversible.
  \end{prop}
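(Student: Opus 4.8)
The plan is to reduce the nil-reversibility of the Dorroh extension $S=R\bigoplus_D\mathbb{Z}$ to that of $R$ by computing one-sided annihilators explicitly. First I would record that $S$ has identity $(0,1)$ and that the second-coordinate projection $\pi\colon S\to\mathbb{Z}$, $\pi(a,n)=n$, is a ring homomorphism, since the multiplication rule multiplies the second coordinates as integers. As $0$ is the only nilpotent integer, any nilpotent $(a,n)\in S$ must satisfy $n=0$; conversely $(a,0)^k=(a^k,0)$, so I would conclude that $N(S)=\{(a,0):a\in N(R)\}$.

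Next I would fix $(a,0)\in N(S)$ and compute its annihilators in $S$. From the multiplication rule, $(a,0)(b,m)=(ab+ma,0)$ and $(b,m)(a,0)=(ba+ma,0)$, where $ma$ denotes the integer multiple $m\cdot a=(m1_R)a=a(m1_R)$. Hence $(b,m)$ right-annihilates $(a,0)$ iff $a(b+m1_R)=0$ in $R$, and left-annihilates it iff $(b+m1_R)a=0$ in $R$. In other words, writing $c=b+m1_R\in R$, membership of $(b,m)$ in the right (respectively left) annihilator of $(a,0)$ in $S$ is equivalent to $c\in r(a)$ (respectively $c\in l(a)$), the annihilators now being taken in $R$.

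Finally, since $a\in N(R)$ and $R$ is nil-reversible, $r(a)=l(a)$ in $R$; thus for each fixed $(b,m)$ we have $c\in r(a)\iff c\in l(a)$, and the equivalences above give that the right and left annihilators of $(a,0)$ in $S$ coincide. As this holds for every element of $N(S)$, the ring $S$ is nil-reversible. The only point requiring care, rather than a genuine obstacle, is the bookkeeping of the $\mathbb{Z}$-action: one must observe that the correction term $ma$ arising from the Dorroh multiplication lets the scalar $m$ be absorbed into the honest element $b+m1_R$ of $R$, which is precisely what makes the reduction to the annihilators of $R$ go through. Note also that the map $(b,m)\mapsto b+m1_R$ need not be injective, but this is irrelevant, since the argument only compares the two annihilator conditions for one and the same $(b,m)$.
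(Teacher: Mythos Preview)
Your proof is correct and follows essentially the same approach as the paper: identify nilpotents of the Dorroh extension as pairs $(a,0)$ with $a\in N(R)$, and reduce the one-sided annihilator conditions on $(a,0)$ to the conditions $a(b+m1_R)=0$ and $(b+m1_R)a=0$ in $R$, which are equivalent by nil-reversibility of $R$. Your write-up is in fact more careful than the paper's, since you explicitly justify $N(S)=\{(a,0):a\in N(R)\}$ via the projection $\pi$ and spell out why $a\in N(R)$, points the paper takes for granted.
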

  \begin{proof}
    Let $(a,0) \in N(R\bigoplus_D \mathbb{Z})$ and $(s,m) \in l(a,0)$.  Then, $(0,0)=((s+m)a,0)$, hence  $(s+m)a=0$. Since $R$ is nil-reversible, $(s+m) \in r(a)$. So $(a,0)(s,m)=(0,0)$, $l(a,0)\subseteq r(a,0)$. Similarly, $r(a,0) \subseteq l(a,0).$ Hence, $R\bigoplus_D \mathbb{Z}$ is nil reversible.
  \end{proof}
  \begin{prop}\label{subdi}
      The subdirect product of arbitrary family of nil-reversible rings is nil-reversible.
  \end{prop}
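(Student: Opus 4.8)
The plan is to exploit the defining feature of a subdirect product: a ring $R$ is a subdirect product of a family $\{R_i\}_{i\in I}$ of nil-reversible rings precisely when there are surjective ring homomorphisms $\pi_i\colon R\to R_i$ whose kernels intersect trivially, that is, $\bigcap_{i\in I}\ker\pi_i=0$. I would fix $a\in N(R)$ and aim to prove $l(a)\subseteq r(a)$; the reverse inclusion then follows by the symmetric argument, giving $l(a)=r(a)$ and hence nil-reversibility of $R$.

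First I would record that nilpotency passes to the components: if $a^n=0$, then $\pi_i(a)^n=\pi_i(a^n)=0$, so $\pi_i(a)\in N(R_i)$ for every $i$. Next, take any $b\in l(a)$, so $ba=0$. Applying $\pi_i$ gives $\pi_i(b)\pi_i(a)=\pi_i(ba)=0$, i.e. $\pi_i(b)\in l(\pi_i(a))$. Since $R_i$ is nil-reversible and $\pi_i(a)\in N(R_i)$, we have $l(\pi_i(a))=r(\pi_i(a))$, whence $\pi_i(a)\pi_i(b)=0$, that is $\pi_i(ab)=0$, so $ab\in\ker\pi_i$.

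The closing step is to collect these component-wise vanishings: since $ab\in\ker\pi_i$ for every $i$, we get $ab\in\bigcap_{i\in I}\ker\pi_i=0$, so $ab=0$ and $b\in r(a)$. This establishes $l(a)\subseteq r(a)$, and interchanging the roles of the left and right annihilators (again using nil-reversibility of each $R_i$, now in the other direction) yields $r(a)\subseteq l(a)$.

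There is no genuine obstacle here; the argument is a routine transport of the annihilator condition through the projections. The only points requiring care are the reliance on the trivial-intersection-of-kernels property, which is exactly what distinguishes a subdirect product from an arbitrary homomorphic image, and the observation that each $\pi_i(a)$ lands in $N(R_i)$ so that the hypothesis on $R_i$ actually applies. Alternatively, one could shortcut the whole argument by first verifying directly that the full direct product $\prod_{i\in I}R_i$ is nil-reversible (checking $l(a)=r(a)$ component-wise for $a=(a_i)\in N(\prod_i R_i)$, where necessarily each $a_i\in N(R_i)$) and then invoking Remark \ref{sub}, since a subdirect product is in particular a subring of this product.
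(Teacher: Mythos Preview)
Your proof is correct and follows essentially the same approach as the paper: both push the annihilator condition through the canonical projections (equivalently, the quotients $R/I_\lambda$), use nil-reversibility in each factor, and then invoke the trivial intersection of the kernels to conclude. The alternative you sketch at the end (showing the full direct product is nil-reversible and appealing to Remark~\ref{sub}) is also valid and would work equally well.
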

  \begin{proof}
   Let $\{I_{\lambda}| \lambda\in \Delta \}$ be a family of ideals of $R$ such that $\cap_{\lambda \in \Delta}I_\lambda=0$ and $R/I_\lambda$'s are nil-reversible rings, where $\Delta$ is some index set. Take $a\in N(R)$ and $b\in R$ satisfying $ab=0$. There is a positive integer $m$ with the property $a^m=0$. Thus, $a+I_\lambda \in N(R/I_\lambda)$. Since $N(R/I_\lambda)$ is nil-reversible and $(a+I_\lambda)(b+I_\lambda)=0$,~$(b+I_\lambda)(a+I_\lambda)=0$. This implies $ba+I_\lambda=0$. So $ba\in I_\lambda$ for all $\lambda\in \Delta$, i.e., $ba\in \cap_{\lambda \in \Delta}I_\lambda=0$. Following similar argument, $ba=0$ implies $ab=0$. Therefore, the subdirect product of nil-reversible rings is nil-reversible.
  \end{proof}
   \begin{prop}\label{frac}
            Let $J$ be a reduced ideal of a ring $R$ such that $R/J$ is nil-reversible. Then $R$ is nil-reversible.
    \end{prop}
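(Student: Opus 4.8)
The plan is to check the defining condition directly. Fix an arbitrary $a \in N(R)$; the goal is to show $l(a)=r(a)$. Because the two inclusions are proved by symmetric arguments (interchanging the roles of left and right), it suffices to establish $r(a)\subseteq l(a)$. So I would take $b\in r(a)$, meaning $ab=0$, and aim to deduce $ba=0$.

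The first step is to transport this relation to the quotient $R/J$. Writing $\bar{x}=x+J$, note that $a\in N(R)$ forces $\bar{a}\in N(R/J)$, since a nilpotency relation on $a$ descends modulo $J$. From $ab=0$ we get $\bar{a}\,\bar{b}=0$, and since $R/J$ is assumed nil-reversible and $\bar a$ is nilpotent, this yields $\bar{b}\,\bar{a}=0$, i.e.\ $ba\in J$. At this stage we have located $ba$ inside the reduced ideal, but we do not yet know it vanishes.

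The second step exploits the reducedness of $J$. The key observation is that $ba$ is itself nilpotent: indeed $(ba)^2=b(ab)a=0$ precisely because $ab=0$. Thus $ba$ is a nilpotent element lying in the reduced ideal $J$, and reducedness forces $ba=0$; hence $b\in l(a)$ and $r(a)\subseteq l(a)$. The reverse inclusion follows verbatim with left and right exchanged: for $b\in l(a)$ we have $ba=0$, so $\bar{b}\,\bar{a}=0$, nil-reversibility of $R/J$ gives $\bar{a}\,\bar{b}=0$, whence $ab\in J$; and since $(ab)^2=a(ba)b=0$, reducedness of $J$ yields $ab=0$. Therefore $l(a)=r(a)$ for every $a\in N(R)$, so $R$ is nil-reversible.

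Since the argument reduces to just two short observations — transporting the annihilator relation to the quotient via nil-reversibility of $R/J$, and then the square-zero trick $(ba)^2=0$ that activates the reducedness of $J$ — I do not anticipate any genuine obstacle. The only point meriting care is recognizing that it is the nilpotency of the \emph{product} $ba$ (not of $a$ or $b$ individually) that lets the reducedness hypothesis do its work.
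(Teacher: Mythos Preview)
Your proof is correct and follows essentially the same route as the paper's: pass the annihilator relation to $R/J$ via nil-reversibility, then use reducedness of $J$ on the nilpotent product. The paper writes $hw\in J\cap N(R)$ without comment; your explicit justification $(ba)^2=b(ab)a=0$ is exactly the unstated reason behind that inclusion.
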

      \begin{proof}
            Let $w\in N(R)$ and $h\in R$ be such that $wh=0$. This implies $\bar{w}\bar{h}=0$. As $\bar{w}\in N(R/J)$ and $R/J$ is nil-reversible, we have $\bar{h}\bar{w}=0$. Thus, $hw\in J\cap N(R)$. Since $J$ is reduced, we have $hw=0$. As above, $hw=0$ yields $wh=0$. Thus, $R$ is nil-reversible. 
            \end{proof}
        In \textbf{Proposition \ref{frac}}, reducedness of $J $ is necessary. As shown by the following example that in deficiency of this condition the \textbf{Proposition \ref{frac}} is unlikely to be guaranteed.
        \begin{example}
            Consider the ring\\ $R=\left[\begin{array}{rr}
         F & F \\
         0 & F\\
         \end{array}\right]$, where $F$ is a field.
          Let $e_{ij}$ be matrix units with $1$ at the entry $(i,j)$  and zero elsewhere and $J=e_{ij}R$. Then $J=\left[\begin{array}{rr}
         0 & F \\
         0 & 0\\
         \end{array}\right]$. One can easily verify that $J$ is not a reduced ideal . Now, consider the quotient ring 
         $R/J=\left\{\left[\begin{array}{rr}
         x & 0 \\
         0 & y\\
         \end{array}\right]+J:x,y \in F\right\}$.  Clearly, $R/J$ is reduced and so nil-reversible.
         Observe that $e_{12}^2=0$, $e_{12} \in N(R)$.\\ $e_{12}e_{11}=0$. This implies $e_{11} \in r(e_{12})$. But
         $e_{11}e_{12}=e_{12} \ne 0$. Thus, $r(e_{12})\neq l(e_{12})$. 
        \end{example}
           A homomorphic image of a nil-reversible ring may not be nil-reversible.
          \begin{example}
          Let $R=D[x]$, where $D$ is a division ring and $J=<xy>$, where $xy\neq yx$. As $R$ is a domain, $R$ is nil-reversible. Clearly $\overline{yx}\in N(R/J)$ and $\overline{x}(\overline{yx})=\overline{xyx}=0$. But, $(\overline{yx})\overline{x}=\overline{{yx}^2}\neq 0$. This implies $R/J$ is not nil-reversible. 
          
          \end{example}
          \begin{prop}
            If $R$ is a nil-reversible ring and $J$ an ideal consisting of nilpotent elements of bounded index ~$\leq m$ in $R$, then $R/J $ is nil-reversible.
          \end{prop}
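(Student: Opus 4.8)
The plan is to verify the defining condition of nil-reversibility for $R/J$ directly. Given $\bar{w}\in N(R/J)$ and $\bar{h}\in R/J$ with $\bar{w}\bar{h}=0$, I would aim to show $\bar{h}\bar{w}=0$; the reverse implication is symmetric. Fixing lifts $w,h\in R$, the hypotheses read $w^{k}\in J$ for some $k\ge 1$ and $wh\in J$. First I would promote $\bar w$ to a genuine nilpotent of $R$: since every element of $J$ has index $\le m$, from $w^{k}\in J$ we get $w^{km}=(w^{k})^{m}=0$, so $w\in N(R)$. This is the first use of the bounded-index hypothesis, and it is exactly what allows the nil-reversibility of $R$ to be applied to $w$, and to each of its powers, which again lie in $N(R)$.

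Next I would combine the ideal structure of $J$ with the nil-reversibility of $R$ to control $hw$. Since $J$ is two-sided and $wh\in J$, we have $(hw)^{2}=h(wh)w\in J$, and more generally $(hw)^{j}=h(wh)^{j-1}w\in J$ for all $j\ge 2$. In addition, $wh\in J$ forces $(wh)^{m}=0$; rewriting $(wh)^{m}=w\,(hw)^{m-1}h$ shows $(hw)^{m-1}h\in r(w)=l(w)$, where the equality is nil-reversibility of $R$ applied to $w\in N(R)$. Hence $(hw)^{m}=((hw)^{m-1}h)w=0$, so that $hw\in N(R)$ and $(hw)^{2}\in J$. At this point $R/J$ is already seen to be $2$-primal, since $N(R/J)=N(R)/J$ and $N(R)$ is an ideal by \textbf{Proposition \ref{a2w}}; but $2$-primality alone does not give nil-reversibility, so the real content remains.

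The remaining and decisive step is to upgrade $hw\in N(R)$ together with $(hw)^{2}\in J$ to the conclusion $hw\in J$, which is exactly what $\bar h\bar w=0$ demands. I expect this to be the main obstacle. Nilpotency of $hw$, or membership of one of its powers in $J$, does not by itself place $hw$ in $J$, so the content of the proposition lies in using the index bound $\le m$ of $J$ to localize the nilpotent $hw$ into $J$ rather than merely into the larger ideal $N(R)$. The route I would attempt is an induction on the nilpotency index of $w$, repeatedly transferring a factor across a product via the annihilator identities $r(w^{i})=l(w^{i})$ for $w\in N(R)$, and trying to arrange that the bounded index terminates the process inside $J$. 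Whether this descent can be made to close, equivalently whether one can rule out a nilpotent $hw$ with $(hw)^{2}\in J$ that nonetheless lies outside $J$, is the crux of the argument and the step I would scrutinize most carefully.
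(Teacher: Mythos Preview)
Your outline follows the paper's argument almost step for step: lift $\bar a,\bar b$ to $a,b\in R$, use the bounded index of $J$ to promote $\bar a\in N(R/J)$ to $a\in N(R)$, use $ab\in J$ and the index bound to get $(ab)^{m}=0$, rewrite this as $a(ba)^{m-1}b=0$, and invoke nil-reversibility of $R$ at $a\in N(R)$ to obtain $(ba)^{m}=0$. So far the two arguments are identical.

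The point you single out as the \emph{crux}---passing from $(ba)^{m}=0$ to $ba\in J$---is precisely the step the paper handles by bare assertion: it writes ``$(ba)^m=0$ implying $ba\in J$'' with no further argument. But the hypothesis on $J$ says only that every element of $J$ has index at most $m$, i.e.\ $J\subseteq\{x\in R:x^{m}=0\}$; it does \emph{not} say that every nilpotent of index $\le m$ lies in $J$. For instance, in the commutative ring $R=k[x,y]/(x^{2},y^{2})$ the ideal $J=(x)$ has every element squaring to zero, yet $y^{2}=0$ with $y\notin J$. So the implication $(ba)^{m}=0\Rightarrow ba\in J$ is not a consequence of the stated hypotheses, and the paper gives no mechanism to obtain it. Your proposed ``descent'' via the identities $r(w^{i})=l(w^{i})$ does not produce that containment either, since nil-reversibility only transfers factors across zero products and never manufactures membership in a specific sub-ideal of $N(R)$.

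In short, your approach is the same as the paper's, and the gap you correctly isolate is also present---and unaddressed---in the paper's own proof.
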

          \begin{proof}
            Let $\overline{a}\in N(R/J),\overline{b} \in R/J$ such that $\overline{a}\overline{b}=0$. Then $ab\in J$ and $\overline{a}\in N(R/J)$. This implies $a\in N(R)$. So $(ab)^m=0=a(ba)^{m-1}b$ for some positive integer $m$. Since $R$ is nil-reversible, $(ba)^m=0$ implying $ba\in J$ and $r(\overline{a})\subseteq l(\overline{a})$. Proceed similarly to get $l(\overline{a})\subseteq r(\overline{b})$. Therefore $R/J$ is nil-reversible.
          \end{proof}

	\section{Polynomial and power series extensions of nil-reversible rings}
	The polynomial ring over a reduced ring is reduced. However, the polyynomial ring over a reversible ring may not be semicommutative by  \cite[ Example 2.1]{ext}.
	   Based on this observation, it is quite natural to question whether the polynomial ring over a nil-reversible ring is nil-reversible. But the following example gives the answer in negative.
 \begin{example} 
     Take $\mathbb{Z}_2$ as the field of integers modulo 2 and let $\Omega=\mathbb{Z}_2[w_0,w_1,w_2, h_0,h_1,h_2,d]$ be the free algebra of polynomials with zero constant terms in non-commuting indeterminates $w_0,w_1,w_2,h_0,h_1,h_2$ and $d$ over $\mathbb{Z}_2.$ Take an ideal $J$ of the ring $\mathbb{Z}_2+\Omega$ generated by $w_0h_0, w_0h_1+w_1h_0, w_0h_2+w_1h_1+w_2h_0, w_1h_2+w_2h_1, w_2h_2,w_0ph_0, w_2ph_2, h_0w_0, h_0w_1+h_1w_0, h_0w_2+h_1w_1+h_2w_0, h_1w_2+h_2w_1,h_2w_2,h_0pw_0,h_2pw_2, (w_0+w_1+w_2)p(h_0+h_1+h_2),(h_0+h_1+h_2)p(w_0+w_1+w_2) ~and~ p_1p_2p_3p_4$ where $p,p_1,p_2,p_3,p_4\in \Omega$. Take $R=\frac{\mathbb{Z}_2+\Omega}{J}$. Then we have $R[x]\cong \frac{(\mathbb{Z}_2+\Omega)[x]}{J[x]}$. Now, by \cite[Example 2.1]{ext}, $R$ is reversible and hence nil-reversible. Now, 
       $(\overline{w_0+w_1x+w_2x^2})\in N(R[x])$ and $(\overline{h_0+h_1x+h_2x^2})\in R[x]$ and $d(h_0+h_1x+h_2x^2)(w_0+w_1x+w_2x^2)\in J[x]$ which means $\overline{d}(\overline{h_0+h_1x+h_2x^2})\in l(\overline{w_0+w_1x+w_2x^2})$. But $(w_0+w_1x+w_2x^2)d(h_0+h_1x+h_2x^2)\notin J[x]$, since $w_0dh_1+w_1dh_0\notin J$. This implies $\overline{d}(\overline{h_0+h_1x+h_2x^2})\notin r(\overline{w_0+w_1x+w_2x^2})$. Thus, $R[x]$ is not nil-reversible.
   \end{example}
     Given a nil-reversible ring $R$, we investigate some conditions under which $R[x]$ also becomes nil-reversible. 
      \begin{prop}
                  If $R$ is an Armendariz ring, then $R$ is nil-reversible if and only if $R[x]$ is nil-reversible.
          \end{prop}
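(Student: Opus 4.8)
The plan is to prove the two implications separately, with the reverse implication essentially immediate and the forward implication carrying all the content. For the ``if'' direction I would simply observe that $R$ embeds in $R[x]$ as the subring of constant polynomials, so if $R[x]$ is nil-reversible then so is $R$ by \textbf{Remark \ref{sub}}.

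For the ``only if'' direction, assume $R$ is Armendariz and nil-reversible. The first step is to pin down the nilpotent polynomials: I claim every $f(x)=\sum_i w_ix^i\in N(R[x])$ has all its coefficients $w_i\in N(R)$. Here I would exploit that nil-reversibility already forces strong structure on $N(R)$: by \textbf{Proposition \ref{a2w}}, $R$ is $2$-primal, hence $NI$, so $N(R)$ is an ideal and $R/N(R)$ is reduced. Passing to $(R/N(R))[x]\cong R[x]/N(R)[x]$, the image of the nilpotent $f(x)$ is a nilpotent element of a polynomial ring over a reduced ring, which is again reduced; hence this image is $0$ and each $w_i\in N(R)$. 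This structural reduction is the crux.

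The second step uses the Armendariz hypothesis to pass between annihilation of polynomials and annihilation of coefficients. Given $f(x)=\sum_i w_ix^i\in N(R[x])$ and $g(x)=\sum_j h_jx^j\in R[x]$, if $f(x)g(x)=0$ then the Armendariz condition gives $w_ih_j=0$ for all $i,j$; since each $w_i\in N(R)$ by the first step, nil-reversibility of $R$ yields $h_jw_i=0$ for all $i,j$, whence $g(x)f(x)=\sum_k\big(\sum_{i+j=k}h_jw_i\big)x^k=0$. The reverse passage, from $g(x)f(x)=0$ to $f(x)g(x)=0$, is identical after interchanging the roles of the two polynomials in the Armendariz condition. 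This shows $l(f)=r(f)$ for every $f\in N(R[x])$, that is, $R[x]$ is nil-reversible.

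The main point to get right is the first step. The tempting but delicate route is to try to deduce directly from the Armendariz condition that a nilpotent polynomial has nilpotent coefficients, by peeling off leading terms and inducting on the degree or on the index of nilpotency; the cross terms make this awkward to control. The cleaner route I would take is to note that nil-reversibility is strictly stronger than being $NI$, so that $R/N(R)$ is reduced and the reducedness of polynomial rings over reduced rings settles the coefficient-nilpotency at once; the Armendariz hypothesis is then needed only for the coefficient-wise annihilation in the second step.
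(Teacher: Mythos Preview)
Your proof is correct and follows the same overall architecture as the paper's: first show that a nilpotent polynomial over $R$ has all its coefficients in $N(R)$, then use the Armendariz hypothesis to split $f(x)g(x)=0$ into $w_ih_j=0$, flip each of these via nil-reversibility of $R$, and reassemble to get $g(x)f(x)=0$; the converse is handled by \textbf{Remark \ref{sub}} in both.

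The one genuine point of departure is how you obtain $N(R[x])\subseteq N(R)[x]$. The paper invokes \cite[Corollary~5.2]{nilp}, which uses the Armendariz hypothesis to conclude $N(R[x])=N(R)[x]$ directly. You instead appeal to \textbf{Proposition \ref{a2w}} to get that $R$ is $2$-primal, so $R/N(R)$ is reduced and hence $(R/N(R))[x]\cong R[x]/N(R)[x]$ is reduced, forcing any nilpotent polynomial to land in $N(R)[x]$. Your route is more self-contained (no external citation) and has the conceptual advantage of isolating exactly where the Armendariz hypothesis is needed---only for the coefficient-wise annihilation, not for locating the nilpotents. The paper's route, by contrast, yields the full equality $N(R[x])=N(R)[x]$ in one stroke from Antoine's result; note that the paper later reproves this equality internally for nil-reversible rings in \textbf{Proposition \ref{eql}}, so your shortcut via $2$-primality is entirely in the spirit of the paper's later development.
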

             \begin{proof}
                  Suppose $R$ is nil-reversible.
                  Let $g(x)=\sum\limits_{i=0}^{m}w_ix^i\in N(R[x])$, $h(x)=\sum\limits_{j=0}^{n}h_jx^j\in R[x]$ be such that $g(x)h(x)=0$. By \cite[Corollary 5.2]{nilp}, $N(R[x])=N(R)[x]$. So $w_i\in N(R)$ for all $i$. Since $R$ is Armendariz, $w_ih_j=0$ for all $i,j$. By nil-reversibility, $h_jw_i=0$ for all $i,j$. So, $h(x)g(x)=0$. Following the similar argument, $h(x)g(x)=0$ implies $g(x)h(x)=0 $. Therefore, $R[x]$ is nil-reversible. The proof of the  converse is trivial.    
                  \end{proof}
      \begin{prop}
                 Let $\Delta$ denotes a multiplicatively closed subset of $R$ consisting of central non-zero divisors. Then $R$ is  nil-reversible if and only if $\Delta^{-1}R$ is nil-reversible. 
                 \end{prop}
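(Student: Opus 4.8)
The plan is to exploit the fact that, because $\Delta$ consists of central non-zero divisors, the localization is two-sided and every element of $\Delta^{-1}R$ can be written as $s^{-1}a = as^{-1}$ with $a\in R$ and $s\in\Delta$, so that products and powers of such fractions behave by multiplying numerators and denominators separately. The backward implication is immediate: since each element of $\Delta$ is a non-zero divisor, the canonical map $R\to\Delta^{-1}R$, $r\mapsto r/1$, is injective, and so $R$ is (up to isomorphism) a subring of $\Delta^{-1}R$. Hence if $\Delta^{-1}R$ is nil-reversible, then $R$ is nil-reversible by Remark \ref{sub}.

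For the forward implication, assume $R$ is nil-reversible. The first key step is to identify the nilpotent elements of $\Delta^{-1}R$. Because each $s\in\Delta$ is central, one has $(s^{-1}a)^{n}=s^{-n}a^{n}$, and since $s^{-n}$ is a unit this vanishes exactly when $a^{n}=0$; thus $s^{-1}a\in N(\Delta^{-1}R)$ if and only if $a\in N(R)$. This lets me always represent a nilpotent element of the localization by a nilpotent numerator in $R$.

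The second key step reduces annihilation in $\Delta^{-1}R$ to annihilation in $R$. Given $\alpha=s^{-1}a\in N(\Delta^{-1}R)$, so that $a\in N(R)$, and an arbitrary $\beta=t^{-1}b\in\Delta^{-1}R$, centrality of the denominators yields $\alpha\beta=(st)^{-1}ab$ and $\beta\alpha=(ts)^{-1}ba$. Since $st$ is again a central non-zero divisor, $\alpha\beta=0$ if and only if $ab=0$, and likewise $\beta\alpha=0$ if and only if $ba=0$. Applying nil-reversibility of $R$ to $a\in N(R)$ gives $ab=0 \Leftrightarrow ba=0$, and chaining these equivalences produces $\alpha\beta=0 \Leftrightarrow \beta\alpha=0$. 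As $\beta$ was arbitrary, this shows $l(\alpha)=r(\alpha)$ for every $\alpha\in N(\Delta^{-1}R)$, so $\Delta^{-1}R$ is nil-reversible.

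The only part requiring genuine care is the fraction bookkeeping: verifying that every element can indeed be placed over a common denominator from $\Delta$, that the product and power formulas hold precisely because $\Delta$ is central, and that cancelling the central non-zero-divisor denominator is legitimate when passing between $ab=0$ in $R$ and $(st)^{-1}ab=0$ in $\Delta^{-1}R$. Given the centrality and non-zero-divisor hypotheses these are routine, so I expect no serious obstacle; the single point to confirm explicitly is that nilpotency in the localization forces nilpotency of a numerator representative, which is exactly the content of the power formula above.
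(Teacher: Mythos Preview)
Your proposal is correct and follows essentially the same route as the paper: for the forward direction you use centrality to write $(s^{-1}a)^n=s^{-n}a^n$ and $\alpha\beta=(st)^{-1}ab$, reduce annihilation in $\Delta^{-1}R$ to annihilation in $R$, and then invoke nil-reversibility of $R$; for the converse you spell out (via the injective canonical map and Remark~\ref{sub}) what the paper simply records as ``trivial''.
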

                  \begin{proof}
                    Suppose $R$ is nil-reversible. Let $s^{-1}a\in N(\Delta^{-1} R).$ Then $(s^{-1}a)^n=0$ for some positive integer $n$. This implies $a^n=0$, so $a\in N(R)$. For any $(s_1^{-1}b)\in \Delta^{-1}R$ having the property that $(s_1^{-1}b)(s^{-1}a)=0$, we have $(ss_1)^{-1}ba=0$, $ba=0$. Since $R$ is nil-reversible, $ab=0$, so $s^{-1}s_1^{-1}ab=0$ which further yields $(s^{-1}a)(s_1^{-1}b)=0$. So, $l(s^{-1}a)\subseteq r(s^{-1}a)$. Similarly, $r(s^{-1}a)\subseteq l(s^{-1}a)$. Hence $\Delta^{-1}R$ is nil-reversible.\\
                    The converse part is trivial.
                    
                    \end{proof}
                    \begin{cor}
                    \label{lau}
                         $R[x]$ is nil-reversible if and only if $R[x,x^{-1}]$ is nil-reversible.
                    \end{cor}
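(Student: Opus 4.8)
The plan is to recognize the Laurent polynomial ring $R[x,x^{-1}]$ as a central localization of $R[x]$ and then invoke the preceding proposition verbatim, taking $R[x]$ in the role of the base ring. First I would set $\Delta = \{x^n : n \geq 0\}$ and verify that $\Delta$ is a multiplicatively closed subset of $R[x]$ consisting of central non-zero divisors. Multiplicative closure is immediate since $x^m x^n = x^{m+n}$, and centrality is clear because $R[x]$ is the ordinary (commuting) polynomial ring, so $x$ commutes with every element of $R$ and with itself; hence every power $x^n$ lies in $Z(R[x])$. That each $x^n$ is a non-zero divisor follows because multiplying a polynomial by $x^n$ merely shifts its coefficients, so $f(x)x^n = 0$ forces $f(x)=0$, and symmetrically on the other side.

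Next I would identify the localization explicitly. By construction $\Delta^{-1}(R[x])$ consists of formal fractions of the form $x^{-n} f(x)$ with $f(x)\in R[x]$ and $n \geq 0$; inverting all powers of $x$ is precisely what adjoins $x^{-1}$, so one obtains the natural isomorphism $\Delta^{-1}(R[x]) \cong R[x,x^{-1}]$.

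With these two observations in hand the result is immediate: applying the preceding proposition with $R[x]$ as the base ring and $\Delta$ as above shows that $R[x]$ is nil-reversible if and only if $\Delta^{-1}(R[x]) \cong R[x,x^{-1}]$ is nil-reversible, which is exactly the assertion of the corollary.

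I expect no substantive obstacle here; the corollary is essentially a specialization of the previous proposition. The only point requiring care is confirming that the hypotheses of that proposition genuinely hold for $\Delta$ inside $R[x]$, namely the centrality and non-zero-divisor property of the powers of $x$. Both are routine for the ordinary polynomial ring, so the statement follows at once.
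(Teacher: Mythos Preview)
Your proposal is correct and follows essentially the same approach as the paper: take $\Delta=\{1,x,x^2,\dots\}\subseteq R[x]$, observe it is a multiplicatively closed set of central non-zero divisors with $\Delta^{-1}R[x]=R[x,x^{-1}]$, and apply the preceding proposition. You supply more detail in verifying the hypotheses than the paper does, but the argument is the same.
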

                     \begin{proof}
                       Suppose $R[x]$ is nil-reversible. Take $\Delta=\{1,x,x^2,x^3,...\}\subseteq R[x]$. Clearly, $\Delta$ is multiplicatively closed subset consisting of central  non-zero divisors of $R[x]$ and $R[x,x^{-1}]=\Delta^{-1}R[x]$. So $R[x,x^{-1}]$ is nil-reversible. \\The proof of the converse is trivial.
                       \end{proof}
      \begin{cor}
      For an Armendariz ring $R$, the following are equivalent;
         \begin{enumerate}
         \item $R$ is nil-reversible.
         \item $R[x]$ is nil-reversible.
         \item $R[x,x^{-1}]$ is nil-reversible.
         \end{enumerate}
      \end{cor}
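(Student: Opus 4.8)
The plan is to obtain the three-way equivalence by chaining the two results that immediately precede this corollary, since between them they already supply both links of the cycle. First I would record that the equivalence of (1) and (2) is exactly the content of the preceding proposition, whose standing hypothesis is precisely that $R$ be Armendariz; as that is assumed here, the proposition applies verbatim and yields $R$ nil-reversible $\iff R[x]$ nil-reversible.

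Next I would invoke \textbf{Corollary \ref{lau}} to link (2) and (3). The point worth flagging is that this corollary holds for an \emph{arbitrary} ring: its proof only uses that $\Delta=\{1,x,x^2,\dots\}$ is a multiplicatively closed set of central non-zero divisors in $R[x]$ with $R[x,x^{-1}]=\Delta^{-1}R[x]$, and never appeals to the Armendariz condition. Hence $R[x]$ nil-reversible $\iff R[x,x^{-1}]$ nil-reversible holds unconditionally, and in particular under our hypothesis.

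Combining the two biconditionals gives $(1)\iff(2)\iff(3)$, which is the assertion. There is no genuine obstacle to overcome: the substantive arguments were carried out in establishing the two component results, so this corollary is essentially a bookkeeping statement assembling them. The only subtlety I would make explicit is the asymmetry in where the Armendariz hypothesis is consumed—it is required for the $(1)\iff(2)$ passage between $R$ and $R[x]$, whereas the $(2)\iff(3)$ passage, being a localization at the powers of $x$, is free of it.
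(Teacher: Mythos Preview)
Your proof is correct and matches the paper's intended approach: the corollary is stated without proof precisely because it is the evident combination of the preceding Armendariz proposition (giving $(1)\iff(2)$) and Corollary~\ref{lau} (giving $(2)\iff(3)$). Your observation that the Armendariz hypothesis is consumed only in the $(1)\iff(2)$ step is accurate and a worthwhile remark.
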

    \begin{prop}
      For a right principally projective ring $R$ and a natural number $m\geq 2$, $R$ is nil-reversible if and only if $\frac{R[x]}{<x^m>}$ is central Armendariz.
    \end{prop}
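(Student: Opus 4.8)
The plan is to show that, for a right principally projective ring, \emph{each} side of the equivalence is in turn equivalent to $R$ being reduced; the stated biconditional then follows immediately. Throughout write $S=R[x]/\langle x^m\rangle$ and let $u$ denote the image of $x$, so that $u$ is central in $S$ and $u^m=0$.

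For the forward implication, suppose $R$ is right principally projective and nil-reversible. By Proposition \ref{red}(1), $R$ is reduced. I would then invoke the classical Anderson--Camillo characterisation that, for $m\ge 2$, the ring $R[x]/\langle x^m\rangle$ is Armendariz precisely when $R$ is reduced; hence $S$ is Armendariz. Since every Armendariz ring is trivially central Armendariz (the relevant products of coefficients are not merely central but equal to $0\in Z(S)$), the ring $S$ is central Armendariz, as required.

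Conversely, assume $R$ is right principally projective and that $S$ is central Armendariz. The first step is the general fact that a central Armendariz ring is abelian. To see this for $S$, take any idempotent $e$ and any element $r$ of $S$ and set $a=er(1-e)$; then $ea=a$, $ae=0$ and $a^2=0$, and a direct check gives $(e+ay)\big((1-e)-ay\big)=0$ in $S[y]$. Central Armendarizness forces the coefficient product $e(-a)=-a$ into $Z(S)$, so $a\in Z(S)$ and hence $a=ea=ae=0$; the symmetric computation yields $(1-e)re=0$, so $e$ is central and $S$ is abelian (cf.\ \cite{carm}). Since every idempotent of $R$ is also an idempotent of $S$, and such an idempotent is central in $R$ if and only if it is central in $S$, abelianness of $S$ descends to $R$. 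Now right principal projectivity finishes the argument: if $w\in R$ with $w^2=0$ then $r(w)=eR$ for an idempotent $e$, giving $ew=w$ and $we=0$; as $e$ is central we obtain $w=ew=we=0$, so $R$ is reduced. Finally a reduced ring is reversible, hence nil-reversible, which completes the converse.

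The step I expect to be the main obstacle is the forward direction's reliance on the Anderson--Camillo result: a fully self-contained treatment would require reproving that $R[x]/\langle x^m\rangle$ is Armendariz whenever $R$ is reduced, which rests on the order-insensitivity of vanishing products in a reduced ring and is somewhat technical. By contrast, the converse is short once the two standard ingredients---central Armendariz implies abelian, and abelian right principally projective implies reduced---are in hand.
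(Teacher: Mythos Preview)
Your argument is correct. The forward implication matches the paper's exactly: nil-reversible plus right principally projective gives reduced via Proposition~\ref{red}(1), and then Anderson--Camillo yields that $R[x]/\langle x^m\rangle$ is Armendariz, hence central Armendariz.

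For the converse you take a different route from the paper. The paper invokes \cite[Theorem~2.1]{carm} to upgrade central Armendariz to Armendariz for $S=R[x]/\langle x^m\rangle$ (using the right p.p.\ hypothesis on $R$), and then applies the Anderson--Camillo characterisation in the reverse direction to conclude that $R$ is reduced. You instead prove directly that central Armendariz forces $S$ to be abelian, observe that abelianness passes down to the subring $R$, and then run the standard ``abelian right p.p.\ $\Rightarrow$ reduced'' argument in $R$ itself. Your path is more self-contained---it avoids the second black-box citation and the need to check that the hypothesis of \cite[Theorem~2.1]{carm} actually applies to $S$ rather than $R$---at the modest cost of writing out the $(e+ay)((1-e)-ay)=0$ computation. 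Both approaches ultimately land on ``$R$ is reduced'' and finish identically from there.
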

      \begin{proof}
        Suppose $R$ is nil-reversible. $R$ is reduced by \textbf{Proposition \ref{red}}. From  \cite[Theorem 5]{arm}, $\frac{R[x]}{<x^m>}$ is central Armendariz.
        Conversely, consider $\frac{R[x]}{<x^m>}$ as central Armendariz. By the hypothesis and  \cite[Theorem 2.1]{carm}, $\frac{R[x]}{<x^m>}$ is Armendariz. \cite[Theorem 5]{arm} implies that $R$ is reduced and so nil-reversible.
        \end{proof}
     \begin{prop}
     Let $R$ be a nil-reversible ring. If $f_1,f_2,...,f_n\in R[x]$ be such that $C_{f_1f_2...f_n}\subseteq N(R)$, then $C_{f_1}C_{f_2}...C_{f_n}\subseteq N(R)$, where $C_f$ represents the set of coefficients of $f$.
     \end{prop}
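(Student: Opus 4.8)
The plan is to pass to the reduced quotient $\overline{R}=R/N(R)$ and there invoke a multi-factor version of the Armendariz property. By \textbf{Proposition \ref{a2w}}(2), $R$ is $2$-primal, hence $NI$, so $N(R)$ is an ideal of $R$; consequently $\overline{R}=R/N(R)$ is reduced, since $\overline{a}^{\,k}=0$ means $a^{k}\in N(R)$ is nilpotent, forcing $a\in N(R)$ and $\overline{a}=0$. Write $\overline{f}$ for the image of $f\in R[x]$ under the coefficientwise projection $R[x]\to\overline{R}[x]$. Then the hypothesis $C_{f_{1}f_{2}\cdots f_{n}}\subseteq N(R)$ says precisely that every coefficient of $f_{1}\cdots f_{n}$ dies in $\overline{R}$, i.e. $\overline{f_{1}}\,\overline{f_{2}}\cdots\overline{f_{n}}=0$ in $\overline{R}[x]$; dually, the desired conclusion $C_{f_{1}}C_{f_{2}}\cdots C_{f_{n}}\subseteq N(R)$ is equivalent to $\overline{a_{1}}\,\overline{a_{2}}\cdots\overline{a_{n}}=0$ for every choice of $\overline{a_{i}}\in C_{\overline{f_{i}}}$. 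Thus the whole statement reduces to the following fact about reduced rings.

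\emph{Claim.} If $S$ is reduced and $g_{1},\dots,g_{n}\in S[x]$ satisfy $g_{1}\cdots g_{n}=0$, then $c_{1}c_{2}\cdots c_{n}=0$ for all $c_{i}\in C_{g_{i}}$. I would prove this by induction on $n$. The base case $n=2$ is exactly the classical fact that a reduced ring is Armendariz. For the inductive step, set $h=g_{2}\cdots g_{n}$, so that $g_{1}h=0$; applying the case $n=2$ to the pair $(g_{1},h)$ gives $c_{1}c=0$ for every $c_{1}\in C_{g_{1}}$ and every $c\in C_{h}$.

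Now fix $c_{1}\in C_{g_{1}}$. The previous step says every coefficient of the polynomial $c_{1}h=c_{1}g_{2}\cdots g_{n}$ vanishes, i.e. $(c_{1}g_{2})\,g_{3}\cdots g_{n}=0$, which is a product of $n-1$ polynomials in $S[x]$. Since multiplying $g_{2}$ on the left by the constant $c_{1}$ turns each coefficient $c_{2}\in C_{g_{2}}$ into $c_{1}c_{2}$, every coefficient of $c_{1}g_{2}$ is of the form $c_{1}c_{2}$ with $c_{2}\in C_{g_{2}}$. The induction hypothesis applied to $(c_{1}g_{2}),g_{3},\dots,g_{n}$ then yields $(c_{1}c_{2})c_{3}\cdots c_{n}=0$ for all $c_{2}\in C_{g_{2}},\dots,c_{n}\in C_{g_{n}}$ (trivially so when $c_{1}c_{2}=0$). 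As $c_{1}\in C_{g_{1}}$ was arbitrary, this proves the Claim. The crux of the argument is exactly this inductive step: the key move is to absorb a single coefficient of $g_{1}$ into $g_{2}$ so as to collapse the product to $n-1$ factors, and this regrouping is the part I would check most carefully, together with the legitimacy of reducing modulo $N(R)$ (which rests on $2$-primality, hence on \textbf{Proposition \ref{a2w}}).

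Finally, combining the Claim (applied to $S=\overline{R}$ and $g_{i}=\overline{f_{i}}$) with the two translations of the first paragraph finishes the proof: from $\overline{f_{1}}\cdots\overline{f_{n}}=0$ we obtain $\overline{a_{1}}\cdots\overline{a_{n}}=0$, i.e. $a_{1}\cdots a_{n}\in N(R)$, which is precisely $C_{f_{1}}\cdots C_{f_{n}}\subseteq N(R)$. One could alternatively cite the Anderson--Camillo multi-polynomial Armendariz theorem for reduced rings in place of proving the Claim by hand, but the self-contained induction above keeps the argument within the tools already available in the paper.
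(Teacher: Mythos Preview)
Your proof is correct and follows essentially the same route as the paper: reduce modulo $N(R)$ using $2$-primality (Proposition~\ref{a2w}), note that $\overline{R}=R/N(R)$ is reduced, and then apply the multi-factor Armendariz property for reduced rings to conclude. The only difference is cosmetic: the paper cites \cite[Proposition~1]{arm} (Anderson--Camillo) directly for the claim that $g_{1}\cdots g_{n}=0$ in a reduced ring forces all coefficient products to vanish, whereas you supply a short self-contained induction on $n$ (and even note the Anderson--Camillo citation as an alternative at the end).
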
  
     \begin{proof}
     From the \textbf{Proposition \ref{a2w}}, $N(R)$ is an ideal of $R$. Clearly, $R/N(R)$ is reduced, and so $R/N(R)$ is Armendariz. Let $f(x)=\sum\limits_{i=0}^{m}w_ix^i\in R[x]$. Define $\zeta: R[x]\rightarrow \frac{R}{N(R)}[x]$ via $\zeta (f(x))=\bar{f}(x)$, where $\bar{f}(x)=\sum\limits_{i=0}^{m}(w_i+N(R))x^i $. Clearly, $\zeta $ is a ring homomorphism. As $C_{f_1f_2...f_n}\subseteq N(R)$, $\bar{f_1}\bar{f_2}...\bar{f_n}=\overline{f_1f_2...f_n}=0$. So, by \cite[Proposition 1]{arm}, $\bar{p_1}\bar{p_2}...\bar{p_n}=0,~ p_i\in C_{f_i},~i=1,2,...,n$. Now, it is easy to verify that $C_{f_1}C_{f_2}...C_{f_n}\subseteq N(R)$. 
     \end{proof}
     \begin{cor}\label{coef}
     If $R$ is nil-reversible ring, then $N(R[x])\subseteq N(R)[x]$.
     \end{cor}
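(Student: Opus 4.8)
The plan is to derive this immediately from the preceding proposition by feeding it the single polynomial repeated $n$ times. Take any $f(x)=\sum_{i=0}^{m}w_ix^i\in N(R[x])$, so that $f^n=0$ for some positive integer $n$. I would apply the previous proposition to the $n$-tuple $f_1=f_2=\cdots=f_n=f$. The hypothesis of that proposition asks that $C_{f_1f_2\cdots f_n}\subseteq N(R)$; here $f_1f_2\cdots f_n=f^n=0$, so its coefficient set is $\{0\}\subseteq N(R)$, and the hypothesis holds trivially.

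The conclusion of the proposition then yields $C_fC_f\cdots C_f\subseteq N(R)$ ($n$ factors). The key observation is that this containment already forces each individual coefficient to be nilpotent: for a fixed coefficient $w_i$ of $f$, the element $w_i^n$ is a product of $n$ members of $C_f$, hence lies in $C_fC_f\cdots C_f\subseteq N(R)$. Thus $w_i^n\in N(R)$, so $(w_i^n)^k=0$ for some $k$, giving $w_i^{nk}=0$; that is, $w_i\in N(R)$. Since $i$ was arbitrary, every coefficient of $f$ lies in $N(R)$, which is exactly the statement $f\in N(R)[x]$.

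I expect the only point requiring care to be the passage from the product condition $C_f^{\,n}\subseteq N(R)$ to the nilpotency of each separate coefficient; this is handled by noting that $w_i^n$ itself is such a product and that a nilpotent power of an element makes the element nilpotent. Everything else is a direct invocation of the previous proposition with the trivial choice of equal factors, so no genuine obstacle arises.

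\begin{proof}
Let $f(x)=\sum_{i=0}^{m}w_ix^i\in N(R[x])$, and choose a positive integer $n$ with $f^n=0$. Applying the previous proposition to $f_1=f_2=\cdots=f_n=f$, whose product $f^n=0$ satisfies $C_{f^n}=\{0\}\subseteq N(R)$, we obtain $C_fC_f\cdots C_f\subseteq N(R)$ ($n$ factors). For each $i$, the element $w_i^n$ is a product of $n$ coefficients of $f$, hence $w_i^n\in N(R)$; thus $w_i^n$ is nilpotent and so $w_i\in N(R)$. Therefore $f\in N(R)[x]$, proving $N(R[x])\subseteq N(R)[x]$.
\end{proof}
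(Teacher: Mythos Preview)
Your proof is correct and is precisely the argument the paper has in mind: the corollary is stated without proof immediately after the proposition on coefficient sets, and your derivation---specializing $f_1=\cdots=f_n=f$ for a nilpotent $f$ and then reading off $w_i^n\in C_f^{\,n}\subseteq N(R)$---is the intended one-line deduction.
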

     \begin{prop}
     \label{eql}
     If $R$ is nil-reversible ring, then $N(R[x])=N(R)[x]$.
     \end{prop}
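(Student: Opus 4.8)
The plan is to establish the two inclusions separately. The inclusion $N(R[x]) \subseteq N(R)[x]$ is already in hand: it is exactly the content of Corollary \ref{coef}. So all the work goes into the reverse inclusion $N(R)[x] \subseteq N(R[x])$, that is, showing that every polynomial whose coefficients are all nilpotent is itself a nilpotent element of $R[x]$. This direction cannot be squeezed out of $N(R)$ merely being a nil ideal, so I would route it through the prime radical.

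First I would invoke Proposition \ref{a2w}(2), which says $R$ is $2$-primal, so that $N(R) = P(R)$ and hence $N(R)[x] = P(R)[x]$. The goal then becomes $P(R)[x] \subseteq N(R[x])$, and since the prime radical of any ring is nil, it suffices to prove the cleaner statement $P(R)[x] \subseteq P(R[x])$.

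The key step is a contraction argument. I would take an arbitrary prime ideal $Q$ of $R[x]$ and check that $Q \cap R$ is a prime ideal of $R$: if $a,b \in R$ satisfy $aRb \subseteq Q \cap R$, then, using that $x$ is central in $R[x]$, one gets $a R[x] b = \sum_i (a r_i b) x^i \subseteq Q$, whence $a \in Q$ or $b \in Q$ by primeness of $Q$, giving $a \in Q\cap R$ or $b \in Q\cap R$. Consequently $P(R) \subseteq Q \cap R \subseteq Q$, and since $Q$ is a two-sided ideal of $R[x]$ it absorbs multiplication by powers of $x$, so every polynomial with coefficients in $P(R)$ lies in $Q$, i.e. $P(R)[x] \subseteq Q$. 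As $Q$ was an arbitrary prime of $R[x]$, intersecting over all of them yields $P(R)[x] \subseteq P(R[x])$.

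Putting the pieces together gives $N(R)[x] = P(R)[x] \subseteq P(R[x]) \subseteq N(R[x])$, which combined with Corollary \ref{coef} delivers the equality $N(R[x]) = N(R)[x]$. I expect the genuine obstacle to be exactly this reverse inclusion, i.e. justifying that a polynomial with nilpotent coefficients is nilpotent: the naive hope that a nil ideal $I$ produces a nil ideal $I[x]$ fails in general (it is reminiscent of the K\"othe problem), so $2$-primality has to enter in an essential way, and it is precisely the contraction-to-$P(R)$ step that exploits it.
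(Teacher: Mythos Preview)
Your argument is correct, but it takes a genuinely different route from the paper's. For the inclusion $N(R)[x]\subseteq N(R[x])$ the paper gives a direct, computational proof: given $f(x)=\sum_{i=0}^n w_i x^i$ with $w_i^{s_i}=0$, it sets $l=s_0+\cdots+s_n+1$, expands $(f(x))^l$ as a sum of monomials $w_{i_1}\cdots w_{i_l}$, and uses a pigeonhole argument together with the nil-reversibility relation (to slide copies of a single $w_k$ past the intervening factors until they collect into $w_k^{s_k}=0$) to kill each monomial. Your proof instead passes through $2$-primality: $N(R)=P(R)$, the contraction $Q\cap R$ of a prime $Q\lhd R[x]$ is prime in $R$, hence $P(R)[x]\subseteq Q$ for every such $Q$, giving $N(R)[x]=P(R)[x]\subseteq P(R[x])\subseteq N(R[x])$.

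Your route is shorter and more conceptual, and it makes transparent that the reverse inclusion really only needs $R$ to be $2$-primal rather than the full nil-reversibility hypothesis (indeed $P(R)[x]\subseteq P(R[x])$ holds for any ring). The paper's approach, by contrast, is entirely self-contained, avoids invoking the prime-ideal machinery, and even yields an explicit nilpotence index for $f(x)$; it also illustrates concretely how the nil-reversibility identity is used to reorder products, which is thematically in line with the rest of the section.
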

     \begin{proof}
     By \textbf{Corollary \ref{coef}}, $N(R[x])\subseteq N(R)[x]$. Let $f(x)=\sum\limits_{i=0}^{n}w_ix^i\in N(R)[x]$. For each $i$, there exists a positive integer $s_i$ with $w_i^{s_i}=0$. Take $l=s_0+s_1+...+s_n+1$. Now,
     $(f(x))^l=(w_0+w_1x+...+w_nx^n)^l=\sum\limits_{p=0}^{nl}\left(\sum\limits_{i_1+...+i_l=p}w_{i_1}w_{i_2}...w_{i_l}\right)x^p$, where $w_{i_1}, w_{i_2},...,w_{i_l}\in \{w_0,...,w_n\}$. If the number of $w_0$'s in the product $w_{i_1}...w{i_l}$ is more than $s_0$, then we can express $w_{i_1}...w{i_l}$ as $h_0w_0^{j_1}h_1w_0^{j_2}h_2...h_{t-1}w_0^{j_t}h_t$, where $t$ is a non-negative integer and $1\leq j_1,j_2,...,j_t, s_0\leq j_1+j_2+...+j_t$ and for each $i,~ 0\leq i \leq t$, $h_i$ is a product of some elements choosen from $\{w_0,...,w_n\}$ or equal to 1. Here, we have $w_0^{j_1+...+j_l}=0,~w_0^{j_1}(w_0^{j_2}...w_0^{j_l}h_0)=0 $. Since $R$ is nil-reversible, $w_0^{j_2}...w_0^{j_l}h_0w_0^{j_1}=0=w_0^{j_2}(w_0^{j_3}...w_0^{j_l}h_0w_0^{j_1})h_1$. This implies $(w_0^{j_3}...w_0^{j_l}h_0w_0^{j_1})h_1w_0^{j_2}=0$. Continuing this way, we get $h_0w_0^{j_1}...w_0^{j_t}h_t=0$. Thus, $w_{i_1}...w_{i_l}=0$.
     If the number of $w_i$'s in $w_{i_1}...w_{i_l}$ is more than $s_i$'s, then proceeding as above, we get $w_{i_1}...w_{i_l}=0$. Hence $\sum\limits_{i_1+i_2+...+i_l=p}w_{i_1}w_{i_2}...w_{i_l}=0$, $(f(x))^l=0$, $N(R)[x]\subseteq N(R[x])$. Therefore, $N(R)[x]=N(R[x])$.
     \end{proof}
     \begin{prop}
     \label{nilA}
     Nil-reversible rings are nil-Armendariz.
     \end{prop}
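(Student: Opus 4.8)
The plan is to reduce the claim to the Armendariz property of the reduced quotient $R/N(R)$. First I would record that, by \textbf{Proposition \ref{a2w}}, a nil-reversible ring $R$ is $2$-primal and hence $NI$, so that $N(R)$ is a two-sided ideal of $R$. The quotient $\bar R = R/N(R)$ is then reduced: if $\bar a^2 = 0$ in $\bar R$, then $a^2 \in N(R)$, which forces $a \in N(R)$ and thus $\bar a = 0$. Since reduced rings are Armendariz, $\bar R$ is Armendariz --- a fact already invoked in the proof of \textbf{Proposition \ref{eql}}.

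Next I would set up the coefficientwise reduction homomorphism $\pi : R[x] \to \bar R[x]$, defined by $\pi\!\left(\sum_k c_k x^k\right) = \sum_k (c_k + N(R)) x^k$, whose kernel is precisely $N(R)[x]$ (a polynomial maps to $0$ exactly when all its coefficients lie in $N(R)$). Taking $f(x) = \sum_{i=0}^{n} w_i x^i$ and $g(x) = \sum_{j=0}^{m} h_j x^j$ in $R[x]$ with $f(x) g(x) \in N(R)[x]$, applying $\pi$ yields $\bar f(x)\, \bar g(x) = 0$ in $\bar R[x]$.

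Finally I would invoke the Armendariz property of $\bar R$: from $\bar f \bar g = 0$ it follows that $\bar{w_i}\,\bar{h_j} = 0$ for all $i,j$, i.e. $w_i h_j \in N(R)$ for all $i,j$, which is exactly the defining condition for $R$ to be nil-Armendariz.

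The argument is short and I anticipate no serious obstacle once the two ingredients are assembled; the only point requiring genuine care is the verification that $\bar R$ is \emph{reduced} (not merely $NI$), which rests on the implication ``$a^2 \in N(R) \Rightarrow a \in N(R)$'' together with the classical fact that reduced rings are Armendariz. An alternative route would combine the identity $N(R[x]) = N(R)[x]$ from \textbf{Proposition \ref{eql}} with a direct degree induction on the product $f g$, but passing to $R/N(R)$ is decidedly cleaner.
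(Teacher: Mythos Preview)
Your argument is correct. The only step worth a remark is the reducedness of $\bar R=R/N(R)$: the implication ``$a^{2}\in N(R)\Rightarrow a\in N(R)$'' holds in \emph{any} ring (if $(a^{2})^{k}=0$ then $a^{2k}=0$), so this is immediate once $N(R)$ is an ideal, and you have that from \textbf{Proposition~\ref{a2w}(2)}. The fact that $R/N(R)$ is reduced and hence Armendariz is in fact already recorded in the paper, in the proof of the proposition producing \textbf{Corollary~\ref{coef}} (rather than in the proof of \textbf{Proposition~\ref{eql}} as you write, but that is a labeling quibble).

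Your route is genuinely different from the paper's. The paper argues by direct induction on $s=i+j$: starting from $\sum_{i+j=s}w_{i}h_{j}\in N(R)$, it multiplies on the right by suitable $w_{i_{0}}$, uses the inductive hypothesis together with $2$-primality (so that $N(R)$ absorbs the unwanted terms), and peels off $w_{i_{0}}h_{j_{l}}\in N(R)$ one summand at a time. Your approach bypasses all of this by passing to the reduced quotient and quoting the classical Armendariz property there. What your method buys is brevity and a stronger statement---your argument uses only that $R$ is $NI$, so it in effect reproves Antoine's result that $NI$ rings are nil-Armendariz \cite[cf.\ Theorem~3.2]{nilp}, with nil-reversibility entering only through \textbf{Proposition~\ref{a2w}(2)}. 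The paper's hands-on induction, by contrast, is more self-contained and makes visible exactly how the $2$-primal condition interacts with the coefficient relations, which may be pedagogically useful even if mathematically redundant given the earlier material.
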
              	
 \begin{proof}
 Let $R$ be a nil-reversible ring and $f(x)=\sum\limits_{i=0}^{n}w_ix^i,g(x)=\sum\limits_{j=0}^{m}h_jx^j\in R[x]$ and $f(x)g(x)\in N(R)[x]$. Then $f(x)g(x)=\sum\limits_{s=0}^{m+n}\left(\sum\limits_{i+j=s}w_ih_j\right)x^s$, $\sum\limits_{i+j=s}w_ih_j\in N(R)$. Now, we use induction to show that $w_ih_j\in N(R)$ for each $i,j$. For $s=0$,
 $\sum\limits_{i+j=0}w_ih_j=w_0h_0$ $\in N(R)$ which implies $h_0w_0\in N(R)$. For $s=1$,
 $w_0h_1+w_1h_0\in N(R)$. $R$ is $2-primal$ (by \textbf{Proposition \ref{a2w}}), $w_0h_1w_0+w_1h_0w_0\in N(R)$. As $h_0w_0\in N(R)$, $w_0h_1w_0\in N(R)$. Thus, $(w_0h_1)^2\in N(R)$ and so $w_0h_1\in N(R)$, $h_1w_0\in N(R)$. It immediately implies that $w_1h_0\in N(R)$ and so $h_0w_1\in N(R)$. Assume $w_ih_j\in N(R)$ for $i+j=t$, $2\leq t\leq m+n-1$. Now, $\sum\limits_{i+j=t+1}w_ih_j=w_{i_0}h_{j_l}+w_{i_1}h_{j_{l-1}}+...+w_{i_l}h_{j_o}\in N(R)$, where $0\leq l, i_k\in \{0,1,2,...,n\}, j_k\in \{0,1,...,m\}, k\in \{0,1,...,l\}$ and $i_0\leq i_1\leq ...\leq i_l, j_0\leq j_1\leq ....\leq j_l$. If there is only one term in $\sum\limits_{i+j=t+1}w_ih_j$, then we are through. Suppose $\sum\limits_{i+j=t+1}w_ih_j$ contains more than one term. Then we can take either $j_l>i_0$ or $i_0>j_l$. Firstly, take $j_l>i_0$.
 Now, $\sum\limits_{i+j=t+1}w_ih_j=w_{i_0}h_{j_l}+w_{i_1}h_{j_{l-1}}+...+w_{i_l}h_{j_0}\in N(R)$. This implies that $w_{i_0}h_{j_l}w_{i_0}+w_{i_1}h_{j_{l-1}}w_{i_0}+...+w_{i_l}h_{j_0}w_{i_0}\in N(R)$. But $j_{l-1}+i_0\leq t, j_{l-2}+i_0\leq t,...,j_0+i_0\leq t$. Thus, by induction hypothesis $h_{j-1}w_{i_0},...,h_{j_0}w_{i_0}\in N(R)$. Therefore, $w_{i_0}h_{j_l}w_{i_0}\in N(R)$, $w_{i_0}h_{j_l}\in N(R)$. Proceeding similarly, we get that $w_ih_{j}\in N(R)$ for each $i,j$ satisfying $i+j=t+1$. Similar approach can be applied if $i_0>j_l$. Hence $R$ is nil-Armendariz.
 \end{proof}
  \begin{prop}
  If $R$ is nil-reversible, then R[x] is nil-Armendariz.
  \end{prop}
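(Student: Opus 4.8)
The plan is to pass to the reduced quotient $R/N(R)$ and exploit the fact that a polynomial ring over a reduced ring is Armendariz. Note first that one cannot simply invoke \textbf{Proposition \ref{nilA}} applied to $R[x]$, because $R[x]$ need not itself be nil-reversible (an example above shows $R[x]$ can fail to be nil-reversible even when $R$ is reversible). So a genuine argument is required, and the idea is to reduce the nil-Armendariz condition for $R[x]$ to an honest \emph{Armendariz} condition over a reduced ring.

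Write $S=R[x]$ and regard a polynomial over $S$ as an element of $S[y]=R[x][y]$. Let $F(y)=\sum_i f_i(x)\,y^i$ and $G(y)=\sum_j g_j(x)\,y^j$ in $S[y]$ satisfy $F(y)G(y)\in N(S)[y]$; the goal is to show $f_i(x)g_j(x)\in N(S)$ for all $i,j$. The first key step is to identify $N(S)$ explicitly: by \textbf{Proposition \ref{eql}}, $N(R[x])=N(R)[x]$, so $N(S)=N(R)[x]$, and hence every $y$-coefficient of $F(y)G(y)$ lies in $N(R)[x]$.

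Next I would reduce modulo $N(R)$. By \textbf{Proposition \ref{a2w}} the ring $R$ is $2$-primal, so $N(R)$ is an ideal and $\bar R:=R/N(R)$ is reduced. Applying the coefficientwise quotient homomorphism $\Phi:R[x][y]\rightarrow \bar R[x][y]$, whose kernel is $N(R)[x][y]$, to the element $F(y)G(y)\in N(R)[x][y]$ yields $\Phi(F)\Phi(G)=0$ in $(\bar R[x])[y]$. Since $\bar R$ is reduced, $\bar R[x]$ is reduced, and reduced rings are Armendariz; applying the Armendariz property of the coefficient ring $\bar R[x]$ to the identity $\Phi(F)\Phi(G)=0$ gives $\overline{f_i(x)}\,\overline{g_j(x)}=0$ in $\bar R[x]=R[x]/N(R)[x]$ for all $i,j$. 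Translating back, $f_i(x)g_j(x)\in N(R)[x]=N(R[x])=N(S)$ for all $i,j$, which is exactly the nil-Armendariz condition for $S=R[x]$.

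The crux of the argument is \textbf{Proposition \ref{eql}}: it is precisely the equality $N(R[x])=N(R)[x]$ that makes the quotient $S/N(S)$ coincide with $(R/N(R))[x]$, a reduced and hence Armendariz ring; without it the reduction step would not land in a reduced ring and the Armendariz property could not be invoked. The remaining ingredients, namely that a polynomial ring over a reduced ring is reduced and that reduced rings are Armendariz, are standard, so once $N(R[x])=N(R)[x]$ is in hand the proof is essentially formal. I therefore expect the only real work to be the careful bookkeeping of the two variables $x$ and $y$ and the observation that the Armendariz hypothesis must be applied to the coefficient ring $\bar R[x]$ rather than to $\bar R$ itself.
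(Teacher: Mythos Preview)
Your proof is correct. Both your argument and the paper's rely on \textbf{Proposition \ref{eql}} ($N(R[x])=N(R)[x]$) as the essential ingredient, but the packaging differs. The paper first invokes \textbf{Proposition \ref{nilA}} to conclude that $R$ is nil-Armendariz, and then appeals to an external result (Antoine, \cite[Theorem 5.3]{nilp}) which says that a nil-Armendariz ring $R$ with $N(R[x])=N(R)[x]$ has $R[x]$ nil-Armendariz. You instead bypass both Proposition \ref{nilA} and the Antoine citation by reducing modulo $N(R)$: since $R$ is $2$-primal the quotient $\bar R$ is reduced, so $\bar R[x]$ is reduced and hence Armendariz, and then the Armendariz condition in $\bar R[x][y]$ pulls back to the nil-Armendariz condition in $R[x][y]$ via $N(R[x])=N(R)[x]$. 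Your route is more self-contained (no outside reference needed) and in fact uses less about $R$---only $2$-primality and Proposition \ref{eql}, not the full nil-Armendariz property of $R$; the paper's route is terser but outsources the work to Antoine, whose proof is essentially the same reduction you carried out.
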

  \begin{proof}
  Let $R$ be nil-reversible. By \textbf{Proposition \ref{nilA}}, $R$ is nil-Armendariz and $N(R[x])=N(R)[x]$ (\textbf{Proposition \ref{eql}} ). Therefore, by  \cite[Theorem 5.3]{nilp}, $R[x]$ is nil-Armendariz.
  \end{proof}
  \begin{lemma}
  \label{semi}
  Let $R$ be a ring such that $(wh)^2=0$ implies $wh=0$ and $(hw)^2=0$ implies $hw=0$, $w\in R,h\in N(R)$. Then, we have the following;
  \begin{enumerate}
  \item If $xyx=0$ or $xy^2=0$ then $xy=0$ for any $x\in R, y\in N(R)$.
  \item If $y^2x=0$ then $yx=0$ for any $x\in R,y\in N(R)$. 
  \end{enumerate}
  \end{lemma}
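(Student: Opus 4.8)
The plan is to reduce each implication to the two hypotheses, whose content is exactly a ``square-root'' principle: once we know that some product $p$ with $p = wh$ (nilpotent factor on the right) or $p = hw$ (nilpotent factor on the left) satisfies $p^2 = 0$, we may conclude $p = 0$. So for each assertion I would engineer a square-zero element of the correct shape. The guiding observation is that $xy$ (with $y \in N(R)$) has its nilpotent factor on the right, so it is governed by the first hypothesis, while $yx$ has its nilpotent factor on the left and is governed by the second; the whole argument is just a matter of keeping $y$ on the correct side at each step.

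For part (1), the case $xyx=0$ is immediate: I would compute $(xy)^2 = (xyx)y = 0$, and since $xy = x\cdot y$ with $y \in N(R)$, the first hypothesis gives $xy = 0$ at once. For the case $xy^2 = 0$ I would pass through the auxiliary element $yxy$. First, $xy^2 = 0$ yields $yxy^2 = y(xy^2) = 0$, and hence $(yxy)^2 = yxy^2xy = (yxy^2)xy = 0$. Now $yxy = (yx)\cdot y$ has $y \in N(R)$ on the right, so the first hypothesis gives $yxy = 0$. Finally $(xy)^2 = x(yxy) = 0$, and one more application of the first hypothesis produces $xy = 0$.

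Part (2) is the mirror image of the second case of part (1), carried out with the other hypothesis. From $y^2x = 0$ I would get $y^2xy = (y^2x)y = 0$, so that $(yxy)^2 = yxy^2xy = yx(y^2x)y = 0$. Here $yxy = y\cdot(xy)$ has $y \in N(R)$ on the left, so the second hypothesis gives $yxy = 0$; then $(yx)^2 = (yxy)x = 0$ forces $yx = 0$ by the second hypothesis once more.

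The only real (and fairly mild) obstacle is the $xy^2 = 0$ case, because squaring $xy$ directly does not expose the hypothesis $xy^2 = 0$; the device of first killing $yxy$ via $(yxy)^2 = 0$ is what bridges this gap. Beyond that, the proof is purely formal bookkeeping, and the main point requiring care is simply to verify at every step that the nilpotent element occupies the side matching the hypothesis being invoked.
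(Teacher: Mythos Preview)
Your proof is correct and follows essentially the same path as the paper's. The only difference is cosmetic: for the case $xy^2=0$ the paper first invokes Proposition~\ref{sq} to conclude that $R$ is nil-reversible, then uses $l(y)=r(y)$ to pass from $(xy)y=0$ to $y(xy)=0$ directly, whereas you reprove this single instance inline via $(yxy)^2=0$; likewise for part~(2), which the paper dismisses as ``similar computation'' and which you spell out.
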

  \begin{proof}
  By \textbf{Proposition \ref{sq}}, $R$ is nil-reversible. Let $x\in R$ and $y\in N(R)$.
  \begin{enumerate}
   \item Suppose $xyx=0$. This implies $(xy)^2=0$. By hypothesis $xy=0$. If $xy^2=0$, then $yxy=0$ ( by nil-reversibility of $R$ ). So $(xy)^2=0$. This implies $xy=0$. 
   \item By similar computation as above, $y^2x=0$ yields $yx=0$.
   \end{enumerate}
  \end{proof}
  \begin{prop}
  If $R$ is semicommutative in which $(wh)^2=0$ implies $wh=0$ and $(hw)^2=0$ implies $hw=0$ for $w\in R,h\in N(R)$, then both $R[x]$ and $R[[x]]$ are nil-reversible.
  \end{prop}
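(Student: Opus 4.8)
The plan is to prove that $R[[x]]$ is nil-reversible and then obtain the statement for $R[x]$ for free: since $R[x]$ is a subring of $R[[x]]$, nil-reversibility descends by \textbf{Remark \ref{sub}}. First I would record the structural facts that the hypotheses buy us. By \textbf{Proposition \ref{sq}}, $R$ is itself nil-reversible, and by \textbf{Proposition \ref{a2w}} it is $2$-primal, so $N(R)$ is an ideal and $R/N(R)$ is reduced; moreover \textbf{Lemma \ref{semi}} is available. The whole problem then reduces to a single coefficient-level statement: if $f=\sum_{i\geq 0}w_ix^i$ with every $w_i\in N(R)$ and $g=\sum_{j\geq 0}h_jx^j\in R[[x]]$ satisfy $fg=0$, then $w_ih_j=0$ for all $i,j$ (and the symmetric statement for $gf=0$). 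Indeed, once $w_ih_j=0$ for all $i,j$, the nil-reversibility of $R$ gives $h_jw_i=0$ for all $i,j$, hence $gf=0$; running both implications yields $fg=0\iff gf=0$ for $f\in N(R[[x]])$.

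To connect $f\in N(R[[x]])$ with the hypothesis ``all $w_i\in N(R)$'', I would show that a nilpotent power series has nilpotent coefficients. Reducing modulo the ideal $N(R)$ gives a ring homomorphism $R[[x]]\to (R/N(R))[[x]]$ with kernel $N(R)[[x]]$, and a nilpotent $f$ maps to a nilpotent element. Since $R/N(R)$ is reduced and a power series ring over a reduced ring is reduced (the lowest nonzero coefficient of a nilpotent series would be nilpotent, hence zero), the image of $f$ is $0$, so $f\in N(R)[[x]]$. For the polynomial shadow of this argument one may instead cite \textbf{Proposition \ref{eql}}.

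The core is the peeling induction. I would first prove the claim that $w_0h_j=0$ for all $j$, by induction on $j$: the constant term of $fg$ gives $w_0h_0=0$, and for the $x^j$-coefficient equation $\sum_{k=0}^{j}w_kh_{j-k}=0$ I multiply on the left by $w_0$; each cross term $w_0w_kh_{j-k}$ with $k\geq 1$ vanishes because $w_0h_{j-k}=0$ by the inductive hypothesis together with semicommutativity (which yields $w_0Rh_{j-k}=0$), leaving $w_0^2h_j=0$, whence $w_0h_j=0$ by \textbf{Lemma \ref{semi}}(2). Thus $w_0g=0$, so $(f-w_0)g=0$; writing $f-w_0=x\tilde f$ and using that $x$ is a central non-zero-divisor gives $\tilde f g=0$, where $\tilde f=\sum_{i\geq 0}w_{i+1}x^i$ again has nilpotent coefficients. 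Iterating this over the coefficient index (an induction on $i$) delivers $w_ih_j=0$ for all $i,j$. The converse direction, starting from $gf=0$, is handled symmetrically: one shows $h_jw_0=0$ for all $j$ by multiplying on the right by $w_0$ and invoking semicommutativity on the cross terms and \textbf{Lemma \ref{semi}}(1) to pass from $h_jw_0^2=0$ to $h_jw_0=0$, then peels off $w_0$ in the same way.

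The main obstacle is genuinely the power-series case rather than the polynomial one. For $R[x]$ one could argue by downward induction starting from the top-degree coefficient, but power series have no top coefficient, which forces the lowest-coefficient/peeling scheme above; this in turn depends on two facts that must be secured first, namely that nilpotent power series have coefficients in $N(R)$ (needing $2$-primality plus the reducedness of $(R/N(R))[[x]]$) and that multiplication by $x$ is injective so one can legitimately divide out $x$ at each peeling step. Once these are in place, the induction is routine and the rest follows from the nil-reversibility of $R$.
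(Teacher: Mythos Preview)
Your proposal is correct and uses the same ingredients as the paper: \textbf{Proposition \ref{sq}} to get nil-reversibility of $R$, $2$-primality to ensure $N(R)$ is an ideal, the inclusion $N(R[[x]])\subseteq N(R)[[x]]$, semicommutativity together with \textbf{Lemma \ref{semi}} to kill coefficient products, and the subring argument (\textbf{Remark \ref{sub}}) for $R[x]$. The only difference is organizational. The paper runs a single induction on $i+j$: assuming $w_ih_j=0$ for $i+j\le m-1$, it takes the $x^m$-coefficient equation and multiplies on the nilpotent side successively by $h_0,h_1,\dots$ (in the paper's labeling the $h_j$ are the nilpotent coefficients), each time using semicommutativity on the lower terms and \textbf{Lemma \ref{semi}} on the surviving square. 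You instead first fix the lowest nilpotent coefficient $w_0$, prove $w_0h_j=0$ for all $j$ by induction on $j$ (multiplying on the left by $w_0$), and then peel off $w_0$ by dividing out the regular central element $x$. Both schemes unwind to the same elementary step, so the two proofs are essentially the same; your peeling formulation has the mild advantage that the two inductions are cleanly separated and the reduction $\tilde f g=0$ is transparent, while the paper's $i+j$ induction avoids having to justify division by $x$.
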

  \begin{proof}
  By \textbf{Proposition \ref{sq}}, $R$ is nil-reversible. Recall that nil-reversible rings are closed under subrings ( by \textbf{Remark} \ref{sub}), so it suffices to prove that $R[[x]]$ is nil-reversible. Let $g(x)=\sum\limits_{i=0}^{\infty}w_ix^i\in R[[x]], h(x)=\sum\limits_{j=0}^{\infty}h_jx^j\in N(R[[x]])$ and $g(x)h(x)=0$. Since $R$ is nil-reversible, $N(R)$ is an ideal by \textbf{Proposition \ref{a2w}(2)}. By \cite[Proposition 1]{power}, $N(R[[x]])\subseteq N(R)[[x]]$. So $h_j\in N(R)$ for each $j$. Since $g(x)h(x)=0$, we get 
  \begin{equation} \label{1}
  \sum_{k=0}^{\infty}\left(\sum_{i+j=k}w_ih_j\right)x^k=0
  \end{equation} 
  Now, we claim that $w_ih_j=0$ for all $i,j$. We use induction on $i+j$ to prove our claim. Observe that $w_0h_0=0$. This shows that the claim holds true for $i+j=0$. Now, suppose that the claim holds true for $i+j\leq m-1$. From the equation \ref{1}, we get 
  \begin{equation}\label{2}
  \sum_{i=0}^{m}w_ih_{m-i}=0
  \end{equation}
  Multiply the equation \ref{2} by $h_0$ from the right. Then, we obtain $(w_0h_m+w_1h_{m-1}+...+w_mh_0)h_0=0$ which leads to $w_mh_0^2=0$ as $R$ is semicommutative. So $w_mh_0=0$ by the \textbf{Lemma \ref{semi}}. Then the equation \ref{2} becomes
  \begin{equation}\label{3}
  w_0h_m+w_1h_{m-1}+...+w_{m-1}h_1=0
  \end{equation}
  Multiplying the equation \ref{3} by $h_1,h_2,...,h_m$ on the right, we get $w_{m-1}h_1,..., \\ w_1h_{m-1}, w_0h_m=0$. Following similar argument, we get $w_ih_j=0$ for $i+j=m$. Thus, by induction we get $w_ih_j=0$ for all $i,j$. As $R$ is nil-reversible $h_jw_i=0$ for all $i,j$. Hence $h(x)g(x)=0$. Similarly, one can show that $h(x)g(x)=0$ implies $g(x)h(x)=0$. Thus, $R[[x]]$ is nil-reversible.
  \end{proof} 
  \begin{prop}
  Let $R$ be a power serieswise Armendariz ring. Then the following are equivalent:
  \begin{enumerate}
  \item $R$ is nil-reversible.
  \item $R[x]$ is nil-reversible.
  \item $R[[x]]$ is nil-reversible.
  \end{enumerate}
  \end{prop}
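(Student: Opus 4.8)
The plan is to establish the cycle $(1)\Rightarrow(3)\Rightarrow(2)\Rightarrow(1)$, since two of its three implications come for free. Indeed $R$ is a subring of $R[x]$ and $R[x]$ is a subring of $R[[x]]$, so by \textbf{Remark \ref{sub}} nil-reversibility of the larger ring descends to the smaller one; this gives $(3)\Rightarrow(2)$ and $(2)\Rightarrow(1)$ at once. Hence the whole content of the proposition lies in the implication $(1)\Rightarrow(3)$.

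To prove $(1)\Rightarrow(3)$, the first step I would take is to locate the nilpotents of $R[[x]]$. A nil-reversible ring is $2$-primal by \textbf{Proposition \ref{a2w}}, so $N(R)$ is an ideal of $R$, and therefore \cite[Proposition 1]{power} applies to give $N(R[[x]])\subseteq N(R)[[x]]$. This containment is the engine of the argument: it forces every coefficient of a nilpotent power series to already lie in $N(R)$, which is exactly the setting in which the nil-reversibility of $R$ can be used coefficient by coefficient.

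With this in hand, let $g(x)=\sum_{i\geq 0}w_ix^i\in N(R[[x]])$ and $h(x)=\sum_{j\geq 0}h_jx^j\in R[[x]]$ satisfy $g(x)h(x)=0$; by the previous step each $w_i\in N(R)$. Since $R$ is power-serieswise Armendariz, $g(x)h(x)=0$ forces $w_ih_j=0$ for all $i,j$, so that $h_j\in r(w_i)$; as $w_i\in N(R)$, nil-reversibility of $R$ yields $h_j\in l(w_i)$, i.e.\ $h_jw_i=0$ for all $i,j$, whence $h(x)g(x)=0$. This proves $r(g(x))\subseteq l(g(x))$. The opposite inclusion $l(g(x))\subseteq r(g(x))$ is obtained symmetrically, by applying power-serieswise Armendarizness to $h(x)g(x)=0$ and then nil-reversibility of $R$. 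Thus $l(g(x))=r(g(x))$ for every $g(x)\in N(R[[x]])$, which is precisely nil-reversibility of $R[[x]]$ and closes the cycle.

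I expect the only genuine obstacle to be the opening step of $(1)\Rightarrow(3)$, namely the containment $N(R[[x]])\subseteq N(R)[[x]]$. Everything downstream is a routine coefficientwise transfer, but without this containment there is no control over the coefficients $w_i$ of a nilpotent series, and the argument collapses. It is precisely $2$-primality, hence $N(R)$ being an ideal, that rescues it — and this is exactly what \textbf{Proposition \ref{a2w}} provides.
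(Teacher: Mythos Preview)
Your proposal is correct and follows essentially the same approach as the paper: reduce everything to $(1)\Rightarrow(3)$ via the subring property, locate the coefficients of a nilpotent series inside $N(R)$, apply the power-serieswise Armendariz condition to kill each $w_ih_j$, and then flip using nil-reversibility of $R$. The only cosmetic difference is that the paper obtains $N(R[[x]])\subseteq N(R)[[x]]$ by citing \cite[Lemma 2]{power} (which uses the power-serieswise Armendariz hypothesis directly), whereas you route through $2$-primality and \cite[Proposition 1]{power}; either path yields the same containment and the rest of the argument is identical.
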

  \begin{proof}
  It is sufficient to show that $R[[x]]$ is nil-reversible if $R$ is nil-reversible. By \cite[Lemma 2]{power}, $N(R[[x]])\subseteq N(R)[[x]]$. Let $g(x)=\sum\limits_{i=0}^{\infty}w_ix^i\in N(R[[x]]),h(x)=\sum\limits_{j=0}^{\infty}h_jx^j\in R[[x]]$ and $g(x)h(x)=0$. By hypothesis $w_ih_j=0$. Since $R$ is nil-reversible and $N(R[[x]])\subseteq N(R)[[x]]$, $h_jw_i=0$ for all $i,j$. So, $h(x)g(x)=0$. Proceeding as above, we can show that if $h(x)g(x)=0$ then $g(x)h(x)=0$. Therefore $R[[x]]$ is nil-reversible.
  \end{proof}
  \begin{prop}
  Let $R$ be an SPA ring and $\alpha $ an automorphism of $R$. Then the following are equivalent:
  \begin{enumerate}
  \item $R$ is nil-reversible.
  \item $R[x;\alpha]$ is nil-reversible.
  \item $R[x,x^{-1}; \alpha]$ is nil-reversible.
  \item $R[[x;\alpha]]$ is nil-reversible.
  \item $R[[x,x^{-1};\alpha]]$ is nil-reversible.
  \end{enumerate}
  \end{prop}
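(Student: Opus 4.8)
The plan is to isolate the implication $(1)\Rightarrow(5)$ as the only substantial one and to read off every other implication from the fact that nil-reversibility passes to subrings (Remark \ref{sub}). Along the chain $R\subseteq R[x;\alpha]\subseteq R[[x;\alpha]]\subseteq R[[x,x^{-1};\alpha]]$ together with $R[x;\alpha]\subseteq R[x,x^{-1};\alpha]\subseteq R[[x,x^{-1};\alpha]]$, each ring in $(2)$--$(4)$ sits as a subring of the ring in $(5)$ and contains $R$ as a subring. Hence once $(1)\Leftrightarrow(5)$ is proved, Remark \ref{sub} gives $(5)\Rightarrow(4)\Rightarrow(2)\Rightarrow(1)$ and $(5)\Rightarrow(3)\Rightarrow(2)$, which closes the cycle and yields the full equivalence; in particular $(5)\Rightarrow(1)$ is itself immediate from Remark \ref{sub}. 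Thus the real task is to show that $R$ nil-reversible forces $S:=R[[x,x^{-1};\alpha]]$ to be nil-reversible.

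Before that I would record two consequences of the SPA hypothesis. First, SPA forces $R$ to be $\alpha$-compatible, i.e. $ab=0\Leftrightarrow a\alpha(b)=0$: applying the two implications of the SPA condition to the one-term series $ax^{i}$ and $bx^{j}$ gives $ab=0\Rightarrow a\alpha^{i}(b)=0$, while applying it to $ax$ and the constant $b$ gives the converse. Since $\alpha$ is an automorphism, iterating this produces the twist-alignment principle $a_{1}\cdots a_{n}=0\Leftrightarrow \alpha^{k_{1}}(a_{1})\cdots\alpha^{k_{n}}(a_{n})=0$ for all integers $k_{1},\dots,k_{n}$, which is exactly what is needed to neutralize the twists $xw=\alpha(w)x$ that infest every product in $S$. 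Second, I would invoke the multi-factor form of the SPA condition (products of coefficients, one chosen from each factor of a vanishing product, vanish) to show nilpotent elements of $S$ have nilpotent coefficients: writing $f=\sum_{i\ge p}a_{i}x^{i}$ with $f^{n}=0$ and setting $F:=x^{-p}f\in R[[x;\alpha]]$, the unit $x^{np}$ can be extracted from $f^{n}$ to leave a vanishing product of $n$ shifted copies of $F$ in $R[[x;\alpha]]$; taking the same coefficient from each factor and aligning twists gives $c^{\,n}=0$ for every coefficient $c$ of $F$, whence every $a_{i}\in N(R)$.

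With these tools the nil-reversibility of $S$ follows the shape of the power-series case. Given $f\in N(S)$ and $g\in S$ with $fg=0$, write $f=\sum_{i\ge p}a_{i}x^{i}$, $g=\sum_{j\ge q}b_{j}x^{j}$, and pass to $F:=x^{-p}f$ and $G:=gx^{-q}$, both lying in $R[[x;\alpha]]$ and satisfying $FG=x^{-p}(fg)x^{-q}=0$. The SPA condition gives $\alpha^{-p}(a_{i})\,b_{j}=0$ for all $i,j$, and twist-alignment upgrades this to $a_{i}b_{j}=0$. Since each $a_{i}\in N(R)$ and $R$ is nil-reversible, $b_{j}a_{i}=0$, hence $\alpha^{-q}(b_{j})\,a_{i}=0$ again by twist-alignment. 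Feeding these vanishing coefficient products into the ``if'' direction of SPA for $G':=x^{-q}g$ and $F':=fx^{-p}$ yields $G'F'=x^{-q}(gf)x^{-p}=0$, and cancelling the units $x^{\pm}$ gives $gf=0$; the symmetric computation gives $gf=0\Rightarrow fg=0$. Thus $l(f)=r(f)$ for every $f\in N(S)$ and $S$ is nil-reversible.

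I expect the main obstacle to be the bookkeeping of automorphism twists rather than any single hard idea: the product formula, the nilpotency of coefficients, the shift $f\mapsto x^{-p}f$ between $S$ and $R[[x;\alpha]]$, and the final flip each introduce position-dependent powers of $\alpha$, and the argument only goes through because $\alpha$-compatibility (itself extracted from SPA) collapses all of these twists on zero-products. A secondary point needing care is justifying the multi-factor SPA identity and the inclusion $N(S)\subseteq N(R)[[x,x^{-1};\alpha]]$ in the skew Laurent setting, for which I would lean on the corresponding results of \cite{skew}.
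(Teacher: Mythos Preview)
Your proposal is correct and follows essentially the same route as the paper: reduce to $(1)\Rightarrow(5)$ via Remark~\ref{sub}, use SPA to pass between vanishing products and vanishing coefficient products, use that nilpotent elements of the skew Laurent series ring have nilpotent coefficients, and flip via nil-reversibility of $R$ together with $\alpha$-compatibility. The only difference is packaging: where the paper simply cites \cite[Proposition~2.8]{skew} and \cite[Lemma~3.10]{nilcom} for the coefficient-wise vanishing, the nilpotency of coefficients, and the $\alpha$-compatibility step, you unpack these facts directly from the SPA definition (via the shift $f\mapsto x^{-p}f$ into $R[[x;\alpha]]$ and the one-term-series trick for $\alpha$-compatibility), and you make explicit the use of the ``if'' direction of SPA to conclude $gf=0$, which the paper leaves implicit.
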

  \begin{proof}
  It is enough to show that $(1)\implies (5)$. Suppose that $R$ is nil-reversible. Let $g(x)=x^sw_s+x^{s+1}w_{s+1}+...+w_0+w_1x+...\in N(R[[x,x^{-1}; \alpha]]), h(x)=x^th_t+x^{t+1}h_{t+1}+...+h_0+h_1x+...\in R[[x,x^{-1};\alpha]]$ and $g(x)h(x)=0$, where $s$ and $t$ are integers $\leq 0$. Then $w_ih_j=0$ by  \cite[Proposition 2.8]{skew}. Moreover, $w_i\in N(R)$ for all $i\geq s$ by  \cite[Lemma 3.10 (2)]{nilcom}. Then by nil-reversibility of $R$, $h_jw_i=0$ and  by  \cite[Lemma 3.10 (1)]{nilcom}, $h_j\alpha ^m(w_i)=0$ for any non-negative integer $m$. Thus, $h(x)g(x)=0$. Similarly, $h(x)g(x)=0$ implies $g(x)h(x)=0$. Hence, $R$ is nil-reversible.
  
  \end{proof} 	
	
\end{document}